\newdimen\squaresize \squaresize=12pt
\newdimen\thickness \thickness=0.4pt
\def\square#1{\hbox{\vrule width \thickness
   \vbox to \squaresize{\hrule height \thickness\vss
      \hbox to \squaresize{\hss#1\hss}
    \vss\hrule height\thickness}
\unskip\vrule width \thickness}
\kern-\thickness}
\def\vsquare#1{\vbox{\square{$#1$}}\kern-\thickness}
\def\thisbox#1{\kern-.09ex\fbox{#1}}
\def\downbox#1{\lower1.200em\hbox{#1}}
\def\boldentry#1{\textcolor{Red}{\textbf{#1}}}
\newcommand{\tikztableau}[2][scale=0.6,every node/.style={font=\small}]{
    \def\newtableau{#2}
    \begin{array}{c}
    \begin{tikzpicture}[#1]
    \coordinate (x) at (-0.5,0.5);
    \coordinate (y) at (-0.5,0.5);
    \foreach \row in \newtableau {
        \coordinate (x) at ($(x)-(0,1)$);
        \coordinate (y) at (x);
        \foreach \entry in \row {
            \ifthenelse{\equal{\entry}{X}}
               {
                \node (y) at ($(y) + (1,0)$) {};
                \fill[color=gray!10] ($(y)-(0.5,0.5)$) rectangle +(1,1);
                \draw[color=gray] ($(y)-(0.5,0.5)$) rectangle +(1,1);
               }
               {
                \ifthenelse{\equal{\entry}{\boldentry X}}
                   {
                    \node (y) at ($(y) + (1,0)$) {};
                    \fill[color=gray] ($(y)-(0.5,0.5)$) rectangle +(1,1);
                    \draw ($(y)-(0.5,0.5)$) rectangle +(1,1);
                   }
                   {
                    \node (y) at ($(y) + (1,0)$) {\entry};
                    \draw ($(y)-(0.5,0.5)$) rectangle +(1,1);
                   }
               }
            }
        }
    \end{tikzpicture}
    \end{array}}
\newcommand{\tikztableausmall}[1]{\tikztableau[scale=0.45,every node/.style={font=\rm\small}]{#1}}
\begin{document}

\author{Chris Berg, Franco Saliola and Luis Serrano} 

\affil{Laboratoire de combinatoire et d'informatique math\'ematique\\ Universit\'e du Qu\'ebec \`a Montr\'eal} 

\title{The down operator and expansions of near rectangular $k$-Schur functions}

\maketitle

\newtheorem{thm}{Theorem}[section]
\newtheorem{remark}[thm]{Remark}
\newtheorem{defn}[thm]{Definition}
\newtheorem{lemma}[thm]{Lemma}
\newtheorem{obs}[thm]{Observation}
\newtheorem{cor}[thm]{Corollary}
\newtheorem{eg}[thm]{Example}
\newtheorem{prop}[thm]{Proposition}
\newcommand{\K}{\{0,1,\dots,k\}}
\newcommand{\Pk}{\mathcal{P}^{(k)}}
\newcommand{\Ckp}{\mathcal{C}^{(k+1)}}
\newcommand{\Lk}{\Lambda^{(k)}}
\newcommand{\slk}{s_\lambda^{(k)}}
\newcommand{\tr}{z}
\def\mfc{{\mathfrak c}}
\def\tnu{{\tilde \nu}}
\def\tv{{\tilde v}}
\newcommand\kschur[1][k]{\mathfrak{s}^{(#1)}}\def\bfa{{\bf a}}
\def\bfh{{\bf h}}
\def\uu{{\bf u}}
\def\RRR{{\mathfrak{R}}}
\def\win{{\bf win}}
\def\transpose{{t}}
\def\READ#1{\operatorname{\textrm{\bf word}}(#1)}
\def\centroid{{G}}
\def\lbl{{L}}
 
\newcommand{\Z}{\mathbb{Z}}
\newcommand{\R}{\mathbb{R}}
\newcommand{\C}{\mathbb{C}}
 
\newcommand{\Waf}{W_{\textrm{af}}}
\newcommand{\Wfin}{W_{\textrm{fin}}}
\newcommand{\Iaf}{I_{\textrm{af}}}
\newcommand{\Aaf}{A_{\textrm{af}}}
\newcommand{\Qaf}{Q_{\textrm{af}}}
\newcommand{\Paf}{P_{\textrm{af}}}
\newcommand{\Ifin}{I_{\textrm{fin}}}
\newcommand{\Afin}{A_{\textrm{fin}}}
\newcommand{\Qfin}{Q_{\textrm{fin}}}
\newcommand{\Pfin}{P_{\textrm{fin}}}
\newcommand{\asscovercoroot}{\alpha_{vw}^\vee}
\newcommand{\Wext}{W_{\textrm{ext}}}
\newcommand{\hstar}{\mathfrak{h}^*}
\newcommand{\h}{\mathfrak{h}}
\newcommand{\haf}{\mathfrak{h}_{\textrm{af}}}
\newcommand{\hstaraf}{\mathfrak{h}_{\textrm{af}}^*}
\newcommand{\hstarfin}{\mathfrak{h}_{\textrm{fin}}^*}
\newcommand{\hfin}{\mathfrak{h}_{\textrm{fin}}}
\newcommand{\Hom}{\textrm{Hom}}
\newcommand{\realrootsaf}{\Phi_{\textrm{re}}}
\newcommand{\realrootsfin}{\Phi_{\textrm{fin}}}
\newcommand{\level}{\textrm{lev}}
\newcommand{\Gr}{\textrm{Gr}_G}
 
\newcommand{\levoneaction}{\tiny{\textcircled{\raisebox{-.4pt} 1}}}
\newcommand{\levzeroaction}{\tiny{\textcircled{\raisebox{-.4pt} 0}}}
\newcommand{\weakcover}{\mathrel{\prec\!\!\!\cdot}}
\newcommand{\weakorder}{\prec}

\begin{abstract}
We prove that the Lam-Shimozono ``down operator'' on the affine Weyl group
induces a derivation of the affine Fomin-Stanley subalgebra. We use this to verify a conjecture of Berg, Bergeron, Pon
and Zabrocki describing the expansion of non-commutative $k$-Schur functions of ``near
rectangles'' in the affine nilCoxeter algebra. Consequently, we obtain a
combinatorial interpretation of the corresponding $k$-Littlewood--Richardson
coefficients.
\end{abstract}

\section{Introduction}

$k$-Schur functions were first introduced by Lapointe, Lascoux and Morse \cite{LLM} in the study of Macdonald polynomials. Since then, their study has flourished (see for instance \cite{Lam1, LS, LLMS, LM0, LM1, LM2}). In particular they have been realized as Schubert classes for the homology of the affine Grassmannian.
This was done by identifying the algebra of $k$-Schur functions with the affine
Fomin-Stanley subalgebra of the affine nilCoxeter algebra $\mathbb{A}$
\cite{Lam2}. Under this identification, the image of a $k$-Schur function is called a non-commutative $k$-Schur function. A natural question is to ask for the expansion of a non-commutative $k$-Schur
function in terms of the standard basis of $\mathbb{A}$, which is indexed by
affine permutations. 

A important related problem is to describe the multiplicative structure constants of
the $k$-Schur functions, called the \emph{$k$-Littlewood--Richardson}
coefficients due to the similarity with the classical problem of multiplying
Schur functions. It was pointed out in \cite{Lam0} that the
$k$-Littlewood--Richardson coefficients are the same coefficients that appear
in the expansion of a non-commutative $k$-Schur function in the standard basis of $\mathbb{A}$
(see Section \ref{ss:expansions}). Hence, results that give such expansions
also give information about the $k$-Littlewood--Richardson coefficients. This
paper is one such example; others are \cite{Lam2, BSZ, BBTZ, BBPZ}.

In early 2011, Berg, Bergeron, Pon and Zabrocki conjectured an expansion for
non-commutative $k$-Schur functions indexed by a $k$-rectangle $R$ minus its unique removable
cell. Their conjecture combined ideas coming from two groups: Pon's \cite{P}
description of the generators of the affine Fomin-Stanley subalgebra for
arbitrary affine type; and Berg, Bergeron, Thomas and Zabrocki's \cite{BBTZ}
expansion of $\kschur_R$.

This paper initiates the study of certain operators on the affine nilCoxeter algebra
which stabilize the affine Fomin-Stanley subalgebra. We study one particular
family of operators introduced by Lam and Shimozono \cite{LS} and prove that
they are derivations of the affine Fomin-Stanley subalgebra (Theorem
\ref{thm:derivation}). As an application, we use these operators to prove the conjecture of Berg,
Bergeron, Pon and Zabrocki and provide a combinatorial interpretation for the
corresponding $k$-Littlewood--Richardson coefficients (Theorem
\ref{thm:mainthm}). Further properties of such operators and their applications
to  $k$-Schur functions will be developed in a companion article.

\section{$k$-Combinatorics}
 
 In this section, we recall the required terminology associated to the affine type $A$ root system, the affine Weyl group, the connection with bounded partitions and core partitions and the definition of non-commutative $k$-Schur functions.
We work with the affine type $A$ root system $A_k^{(1)}$. Much of this introduction is borrowed from \cite{BBPZ} which in turn was borrowed from \cite{Shim}.
 
\subsection{Affine symmetric group}
$I = \{0,1,\dots, k \}$ will denote the set of nodes of the corresponding Dynkin diagram. We say two nodes $i,j \in I$ are adjacent if $i-j = \pm 1 \mod (k+1)$.
 
We let $W$ denote the \emph{affine symmetric group} with generators $s_i$ for $i \in I$, and relations  $s_i^2 = 1$, $s_is_j = s_j s_i$, when $i$ and $j$ are not adjacent, and $s_i s_j s_i = s_j s_i s_j$ when $i$ and $j$ are adjacent.  An element of the affine symmetric group may be expressed as a word in the generators $s_i$. Given the relations above, an element of the affine symmetric group may have multiple \emph{reduced words}, words of minimal length which express that element. The \emph{length} of $w$, denoted $\ell(w)$, is the number of generators in any reduced word of $w$.
 
 The \emph{Bruhat order} on affine symmetric group elements is a partial order where $v < w$ if there is a reduced word for $v$ that is a subword of a reduced word for $w$.  If $v < w$ and $\ell(v) = \ell(w) - 1$, we write $v \lessdot w$.
There is another order on $W$, called the \textit{left weak order}, which is defined by the covering relation $v \weakcover w$ if $w = s_i v $ for some $i$ and $\ell(v) = \ell(w) -1$.

 For $j \in I$, we denote by $W_j$ the subgroup of $W$ generated by the elements $s_i$ with $i \neq j$.  We denote by $W^j$ the set of minimal length representatives of the cosets $W/W_j$.

\subsection{Roots and weights} 
Associated to the affine Dynkin diagram of type $A_k^{(1)}$ we have a root datum, which consists of a free $\Z$-module $\h$, its dual lattice $\hstar = \Hom(\h, \mathbb{Z})$, a pairing $\langle \cdot , \cdot \rangle: \h \times \hstar \to \Z$ given by $\langle \mu,\lambda \rangle = \lambda(\mu)$, and sets of linearly independent elements $\{ \alpha_i \mid i \in I\} \subset \hstar$ and $\{\alpha_i^\vee \mid i \in I\} \subset \h$ such that
\begin{equation}
\langle \alpha_i^\vee, \alpha_j \rangle = \left\{
        \begin{array}{ll}
                2  & \mbox{if } i = j; \\
                -1 & \mbox{if } i \textrm{ and } j \textrm{ are adjacent;} \\
                0 & \textrm{else.}
        \end{array}
\right.
\end{equation}
The $\alpha_i$ are known as \emph{simple roots}, and the $\alpha_i^\vee$ are \emph{simple coroots}.  The spaces $\h_\R = \h \otimes \R$ and $\hstar_\R = \hstar \otimes \R$ are the \emph{coroot} and \emph{root spaces}, respectively.

Given a simple root $\alpha_i$, we have actions of $W$ on $\h_\R$ and $\hstar_\R$
defined by the action of the generators of $W$ as
\begin{align}
s_i(\lambda) &= \lambda - \langle \alpha_i^\vee, \lambda \rangle \alpha_i \quad \textrm{for } i \in I, \lambda \in \hstar_\R;\label{eqn:action}\\
s_i(\mu) &= \mu - \langle  \mu, \alpha_i \rangle \alpha_i^\vee \quad \textrm{for } i \in I, \mu \in \h_\R.
\end{align}
The action of $W$ satisfies 
\begin{equation}\label{eqn:w} \langle w(\mu), w(\lambda) \rangle = \langle \mu,\lambda \rangle \end{equation}
for all $\mu \in \h_\R$, $\lambda \in \hstar_\R$ and $w \in W$.
 
The set of \emph{real roots} is $\realrootsaf = W \cdot \{\alpha_i \mid i \in I\}$.  Given a real root $\alpha = w(\alpha_i)$, we have an  \emph{associated coroot} $\alpha^\vee = w(\alpha_i^\vee)$ and an  \emph{associated reflection} $s_\alpha = w s_i w^{-1}$ (these are well-defined, and independent of choice of $w$ and $i$).
For a Bruhat covering $v \lessdot w$, there exists a unique root $\alpha_{v,w}$ satisfying the equation $v^{-1} w = s_{\alpha_{v,w}}$. We denote by $\alpha_{v,w}^\vee$ the coroot corresponding to the root $\alpha_{v,w}$.

We let \[\delta =  \alpha_0 + \cdots +  \alpha_n \in \hstar\] denote the \emph{null root}.  
 On the dual side, we let \[c = \alpha_0^\vee + \cdots + \alpha_n^\vee \in \h\] denote the canonical central element. Under the action of $W$, we have $w(\delta) = \delta$ and $w(c) = c$ for $w \in W$.

The action by $W$ preserves the \emph{root lattice} $Q = \bigoplus_{i \in I} \Z \alpha_i$ and \emph{coroot lattice} $Q^\vee = \bigoplus_{i \in I} \Z \alpha_i^\vee$. 

 The \emph{fundamental weights} are the elements 
$\Lambda_i \in \hstar_\R$ satisfying
$\langle \alpha_j^\vee, \Lambda_i \rangle = \delta_{ij}$ for $i,j \in I$ for $i,j \in I$. They generate the \emph{weight lattice} $P = \bigoplus_{i \in I} \Z \Lambda_i$. We let $P^+ = \bigoplus_{i\in I} \mathbb{Z}_{\geq 0} \Lambda_i$ denote the \emph{dominant weights}.  
The \emph{fundamental coweights} are $\{\Lambda_i^\vee \in \h_\R \mid \alpha_i(\Lambda_j^\vee) = \delta_{ij}\,   \hbox{ for } i,j \in I \}$.
  They generate the \emph{coweight lattice} $P^\vee = \bigoplus_{i \in I} \Z \Lambda_i^\vee$.
  
 
\subsection{$k$-bounded partitions, $(k+1)$-cores and affine Grassmannian elements}

Let $\lambda$ be a partition. To each box $(i,j)$ (row $i$, column $j$) of the
Young diagram of $\lambda$, we associate its \textit{residue} defined by
$c_{(i,j)} = (j-i) \mod{(k+1)}$. We let $\Pk$ denote the set of
\emph{$k$-bounded partitions}, namely the partitions $\lambda = (\lambda_1,
\lambda_2, \dots)$ whose first part $\lambda_1$ is at most $k$.

A \emph{$p$-core} is a partition that has no removable rim hooks of length $p$. Lapointe and Morse \cite[Theorem 7]{LM1} showed a bijection between the set $\Pk$ and the set of $(k+1)$-cores. Following their notation, we let $\mathfrak{c}(\lambda)$ denote the $(k+1)$-core corresponding to the partition $\lambda$, and $\mathfrak{p}(\mu)$ denote the $k$-bounded partition corresponding to the $(k+1)$-core $\mu$.
We will also use $\Ckp$ to represent the set of all $(k+1)$-cores.
 
 $W$ acts on $\Ckp$. Specifically, if $\lambda$ is a $(k+1)$-core then 
 \[s_i \lambda =  
\left\{ \begin{array}{ll} \lambda \cup \{\textrm{addable residue } i \textrm{ cells}\} & \textrm{ if } \lambda \textrm{ has an addable cell of residue } i, \\
\lambda \setminus \{ \textrm{removable residue } i \textrm{ cells}\} & \textrm{ if } \lambda \textrm{ has  a removable cell of residue } i, \\ \lambda & \textrm{ otherwise. } \end{array} \right.\]

The \emph{affine Grassmannian elements} are the elements of $W^0$. These
are naturally identified with $(k+1)$-cores in the following way: to a core $\lambda \in \Ckp$, we associate the unique element $w \in W^0$ for which $w\emptyset = \lambda$. For a $k$-bounded partition $\mu$, we let  $w_\mu$ denote the  element of $W^0$ which satisfies  $w_\mu \emptyset = \mathfrak{c}(\mu)$. More details on this can be found in \cite{BB}.

\begin{eg}
The diagram of the $4$-core $\lambda=(5,2,1)$ augmented with its residues,
together with the diagrams of the $4$-cores $s_1 \lambda$ and $s_0 \lambda$:
\begin{center}
    $\lambda = \tikztableausmall{{0,1,2,3,0},{3,0},{2}} \quad s_1 \lambda = \tikztableausmall{{0,1,2,3,0,1},{3,0,1},{2},{1}} \quad s_0 \lambda = \tikztableausmall{{0,1,2,3},{3},{2}}.$
\end{center}
\end{eg}

\subsection{The non-commutative $k$-Schur functions}

Let $\mathbb{F} = \mathbb{C}((t))$ and $\mathbb{O} = \mathbb{C}[[t]]$.  The \emph{affine Grassmannian} may be given by $\Gr := G(\mathbb{F})/G(\mathbb{O})$.  $\Gr$ can be decomposed into \emph{Schubert cells} $\Omega_w = \mathcal{B}wG(\mathbb{O}) \subset G(\mathbb{F})/G(\mathbb{O})$, where $\mathcal{B}$ denotes the Iwahori subgroup and $w \in W^0$, the set of \textit{Grassmannian} elements of $W$.  The Schubert varieties, denoted $X_w$, are the closures of $\Omega_w$, and we have $\Gr = \sqcup \Omega_w = \cup X_w$, for $w \in W^0$.  The homology $H_*(\Gr)$ and cohomology $H^*(\Gr)$ of the affine Grassmannian have corresponding Schubert bases, $\{ \xi_w\}$ and $\{\xi^w\}$, respectively, also indexed by Grassmannian elements.  It is well-known that $\Gr$ is homotopy-equivalent to the space $\Omega K$ of based loops in $K$ (due to Quillen, see \cite[\textsection 8]{ps86} or \cite{mit88}).  The group structure of $\Omega K$ gives $H_*(\Gr)$ and $H^*(\Gr)$ the structure of dual Hopf algebras over $\mathbb{Z}$.

The \emph{nilCoxeter algebra} $\mathbb{A}$ may be defined via generators and relations  with generators $\uu_i$ for $i \in I$, and relations $\uu_i^2 = 0$, $\uu_i\uu_j = \uu_j \uu_i$ when $i$ and $j$ are not adjacent and $\uu_i \uu_j \uu_i = \uu_j \uu_i \uu_j$ when $i$ and $j$ are adjacent.  Since the braid relations are exactly those of the corresponding affine symmetric group, we may index nilCoxeter elements by elements of the affine symmetric group, e.g., $\uu_w = \uu_{i_1} \uu_{i_2}\cdots \uu_{i_k}$, whenever $s_{i_1}s_{i_2}\cdots s_{i_k}$ is a reduced word for $w$.

By work of Peterson \cite{Pet}, there is an injective ring homomorphism $j_0: \mathbf{h}_*(\Gr) \hookrightarrow \mathbb{A}$.  This map is an isomorphism on its image (actually a Hopf algebra isomorphism) $j_0: \mathbf{h}_*(\Gr) \to \mathbb{B}$, where $\mathbb{B}$ is known as the \emph{affine Fomin-Stanley subalgebra}.

\begin{defn}[Lam \cite{Lam2}]
For $w\in W^0$ corresponding to a $k$-bounded partition $\lambda$, the non-commutative $k$-Schur function $\kschur_\lambda$  is the image of the Schubert class $\xi_w$ under the isomorphism $j_0$. In other words, $\kschur_\lambda = j_0 (\xi_w)$.
\end{defn}

\subsubsection{Type A non-commutative $k$-Schur functions}
 
 We now recall the specific situation in type $A$. In \cite{Lam0}, Lam  combinatorially identified the complete homogeneous generators $\mathbf{h}_i$ inside $\mathbb{A}$.
 
 \begin{defn}
 For a subset $S \subset I$, one defines a cyclically decreasing word $w_S \in W$ to be the unique element of $W$ for which any (equivalently all) reduced words $s_{i_1} \dots s_{i_m}$ of $w_S$ satisfy:
 
 \begin{enumerate}
 \item each letter from $I$ appears at most once  in $\{i_1, \dots, i_m \}$;
 \item if $j, j+1 \in S$, then $j+1$ appears before $j$ in $i_1, \dots, i_m$ (where
     the indices are taken modulo $k+1$).
  \end{enumerate}
  Furthermore, we let $\uu_S = \uu_{w_S}$ and
  \[\displaystyle \mathbf{h}_i = \sum_{\substack{S \subset I \\ |S| = i}} \uu_S \in \mathbb{A}.\]
\end{defn}

\begin{eg}
Let $k=3$. The cyclically decreasing elements of length $2$ in the
alphabet $\{\uu_0,\uu_1,\uu_2,\uu_3\}$ are $\uu_2\uu_1$, $\uu_1\uu_0$, $\uu_0\uu_3$, $\uu_3\uu_2$, $\uu_0\uu_2$, and $\uu_1\uu_3$. Thus,
\[
 \mathbf{h}_2 = \uu_2 \uu_1 + \uu_1 \uu_0 +  \uu_0 \uu_3 + \uu_3 \uu_2 + \uu_0\uu_2 + \uu_1\uu_3.
\]
\end{eg}

\begin{thm}[Lam \cite{Lam0}]
    The elements $\{\mathbf{h}_i\}_{i \leq k}$ commute and freely generate the affine
    Fomin-Stanley    subalgebra $\mathbb{B}$ of $\mathbb{A}$. Consequently,
    \[\mathbb{B} \cong \Lambda_{(k)} := \mathbb{Q}[h_1, \dots, h_k],\]
    where $h_i$ denotes the $i^{th}$ complete homogeneous symmetric function.
\end{thm}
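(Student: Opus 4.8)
The plan is to split the statement into three parts: (a) each $\mathbf{h}_i$ lies in $\mathbb{B}$; (b) the $\mathbf{h}_i$ pairwise commute; and (c) they are algebraically independent and generate $\mathbb{B}$. Part (b) comes essentially for free once (a) is known: by Peterson's isomorphism $j_0$ the algebra $\mathbb{B}\cong\mathbf{h}_*(\Gr)$ is commutative (a standard fact for the homology of the affine Grassmannian), so any two of its elements commute. The backbone of part (c) will be a graded dimension count. On one side, $\Lambda_{(k)}=\mathbb{Q}[h_1,\dots,h_k]$ has monomial basis $\{h_\lambda=h_{\lambda_1}h_{\lambda_2}\cdots\}$ indexed by the $k$-bounded partitions $\lambda\in\Pk$, graded so that $\deg h_\lambda=|\lambda|$. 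On the other side, the Schubert basis $\{\xi_w : w\in W^0\}$ together with the identification $W^0\cong\Pk$, under which $\ell(w_\lambda)=|\lambda|$, shows that $\dim\mathbb{B}_n$ equals the number of $k$-bounded partitions of $n$. Thus $\Lambda_{(k)}$ and $\mathbb{B}$ have equal dimension in each degree, and it will remain only to exhibit an injective degree-preserving homomorphism between them.

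For part (a) I would pass to the affine nilHecke ring, which extends $\mathbb{A}$ by a commutative algebra of scalars $S$ (the symmetric algebra on the weight lattice) subject to a straightening relation of the form $\uu_j\lambda = s_j(\lambda)\,\uu_j + \langle\alpha_j^\vee,\lambda\rangle$. By Peterson's description, $\mathbb{B}$ is recovered from the centralizer $Z(S)$ of the scalars by setting $S^+$ to zero, so to prove $\mathbf{h}_i\in\mathbb{B}$ it suffices to produce a lift $\tilde{\mathbf{h}}_i$ that centralizes $S$ and reduces to $\mathbf{h}_i$ modulo $S^+$. Concretely, I would compute $\mathbf{h}_i\lambda-\lambda\mathbf{h}_i$ by straightening each summand $\uu_S\lambda$, collect the scalar corrections $\langle\alpha_j^\vee,\lambda\rangle$, and show they cancel (up to the scalar term defining the lift). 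The defining cyclic condition on $\mathbf{h}_i$ — that $j{+}1$ precede $j$ whenever both lie in $S$ — is exactly what should make these corrections telescope around the Dynkin cycle. I expect this to be the main obstacle: in finite type $A$ the analogous computation is a transparent generating-function telescoping, but the affine wrap-around forces one to track boundary terms carefully, and it is here that the cyclic symmetry becomes indispensable.

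Finally, for part (c), I would argue by leading terms with respect to a linear order on $W$ refining the length function. Each product $\mathbf{h}_\lambda=\mathbf{h}_{\lambda_1}\mathbf{h}_{\lambda_2}\cdots$ should have a unique top term $\uu_{w_\lambda}$, the affine Grassmannian element attached to $\lambda$, occurring with coefficient $1$, with every remaining term strictly lower; equivalently, the $\mathbf{h}_\lambda$ expand unitriangularly in the non-commutative $k$-Schur functions $\kschur_\mu$. Distinct $\lambda\in\Pk$ then have distinct top terms, so $\{\mathbf{h}_\lambda\}$ is linearly independent. This yields both algebraic independence of $\mathbf{h}_1,\dots,\mathbf{h}_k$ and injectivity of the degree-preserving homomorphism $h_i\mapsto\mathbf{h}_i$ from $\Lambda_{(k)}$ into $\mathbb{B}$ (well-defined by the commutativity in part (b), landing in $\mathbb{B}$ by part (a)). Combined with the degreewise dimension match from the first paragraph, injectivity forces surjectivity, giving the isomorphism $\mathbb{B}\cong\Lambda_{(k)}$ and hence the free generation claim.
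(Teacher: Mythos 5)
There is nothing in the paper to compare your attempt against: this theorem is quoted as background from Lam \cite{Lam0}, and the paper gives no proof of it. Judged on its own terms, your proposal is an outline of the known proof architecture in which the two steps carrying essentially all of the mathematical content are exactly the ones left as expectations. First, the telescoping cancellation in part (a) --- that each $\mathbf{h}_i$ commutes with every scalar after setting $S^+$ to zero --- is not carried out; you correctly identify it as the main obstacle, but this computation (that the cyclic condition on the sets $S$ forces the wrap-around terms to cancel) is precisely Lam's key proposition. Second, the leading-term claim in part (c), that $\mathbf{h}_\lambda = \mathbf{h}_{\lambda_1}\mathbf{h}_{\lambda_2}\cdots$ contains $\uu_{w_\lambda}$ with coefficient $1$ and all other Grassmannian terms strictly lower, is asserted (``should have a unique top term''), not proved; it is a genuine combinatorial statement about products of cyclically decreasing words, the noncommutative analogue of the unitriangular Kostka expansion of $h_\lambda$ into Schur functions. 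Granting these two facts, your commutativity argument and graded dimension count do close the proof; but those two granted facts \emph{are} the theorem.

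There is also a circularity you would need to repair. You use two different descriptions of $\mathbb{B}$ interchangeably: in part (a) it is the reduction modulo $S^+$ of the centralizer of the scalars in the affine nilHecke ring, while in parts (b) and (c) it is the image of Peterson's $j_0$, a copy of $H_*(\Gr)$, which is what gives you commutativity and the count $\dim \mathbb{B}_n = \#\{\lambda \in \Pk : |\lambda| = n\}$ for free. The equivalence of these two descriptions is not Peterson's theorem but Lam's \cite{Lam2}, and its proof in the literature relies on the free generation of the centralizer-style algebra by the $\mathbf{h}_i$, i.e., on the statement you are proving. So you must commit to one definition: with the centralizer definition you owe independent proofs of commutativity (Lam's original one is combinatorial) and of the fact that an element of $\mathbb{B}$ is determined by its coefficients on Grassmannian elements, which would replace your dimension count; with the $H_*(\Gr)$ definition, part (a) becomes the statement $j_0(\xi_{w_{(i)}}) = \mathbf{h}_i$, and the straightening computation gives you nothing without Lam's identification. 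As written, the proposal takes the easy half of each description and in doing so assumes something at least as strong as the theorem itself.
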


The type $A$ non-commutative $k$-Schur function $\kschur_\lambda$ is the image of the Schubert class $\xi_{w_\lambda}$ under $j_0$. For our purposes, we take instead the following equivalent definition (see \cite[Definition 6.5]{Lam2} and \cite[Theorem 4.6]{LSS}).

\begin{defn} The \emph{(type $A$) non-commutative $k$-Schur function} corresponding to a $k$-bounded partition $\lambda$ is the unique element $\kschur_\lambda = \sum_w c_w \uu_w$ of $\mathbb{B}$ satisfying: 
\begin{align}
& c_{w_\lambda} = 1;\label{eqn:coeff1} \\
& c_w = 0  \text{ for all other } w \in W^0.\label{eqn:coeff0}
\end{align}
\end{defn}

\section{The Lam-Shimozono up and down operators}
\label{sec:LSgraphs}
\label{sec:algebraic}

In \cite{LS}, Lam and Shimozono studied two graded graphs whose vertex set is
the affine Weyl group $W$, from which one constructs two closely-related
operators defined on the group algebra of $W$. In this section, we recall the
construction of these operators and then develop some properties of the
corresponding induced operators on the nilCoxeter algebra $\mathbb{A}$.

\subsection{Dual graded graphs}

In \cite{Fo1} and \cite{Fo2}, Fomin introduced the notion of \emph{dual graded graphs}, generalizing the notion of \emph{differential posets} in \cite{Sta}. A \emph{graded graph} is a triple $\Gamma = (V, \rho, m,E)$ where $V$ is a set of vertices, $\rho$ is a rank function on $V$, $E$ is a multiset of edges $(x,y)$ for $x,y \in V$ where $\rho(y) = \rho(x)+1$, and every edge has multiplicity $m(x,y) \in \Z_{\ge 0}$. The set of vertices of the same rank is called a \emph{level}.

$\Gamma$ is \emph{locally finite} if every $v \in V$ has finite degree, and we assume this condition for all graphs in this paper. For a graded graph $\Gamma$, the linear down and up operators $D,U: \Z V \rightarrow \Z V$ are defined as follows.
\[
 D_\Gamma(v) = \sum_{(u,v) \in E} m(u,v) u \qquad  U_\Gamma(v) = \sum_{(v,u) \in E} m(v,u) u
\]
In other words, $D$ (respectively $U$) maps a vertex $v$ to a linear combination of its neighbors in the level immediately below (respectively above) $v$ where the coefficients are the multiplicities of the edges.

A pair of graded graphs $(\Gamma,\Gamma')$ is called \emph{dual} if they have the same set of vertices and same rank function, but possibly different edges and multiplicities, and satisfies the following (Heisenberg) commutation relation
\[
 D_{\Gamma'} U_\Gamma - U_\Gamma D_{\Gamma'} = r \textrm{Id}
\]
for a fixed $r \in \Z_{\ge 0}$, called the \emph{differential coefficient}.

One can find many examples of dual graded graphs in \cite{Fo1} and \cite{Fo2},
such as the Young, Fibonacci, and Pascal lattices, the graphs of Ferrers shapes
and shifted shapes, and many more.

\subsection{The Lam-Shimozono dual graded graphs in affine type $A$}

In \cite{LS}, Lam and Shimozono introduced pairs of dual graded graphs for
arbitrary Kac-Moody algebras. Here, we specialize to the case of affine type $A_k^{(1)}$.

Following \cite{LS}, we define two graded graph structures on $W$ (see Figure
\ref{fig:truncatedgraphs} for an illustration of these graphs). The first
constructs a graph with an edge from $v$ to $w$ whenever we have a weak cover
$v \weakcover w$. We denote this graph by $\Gamma_w$ (because its edges depend on
weak Bruhat order).
The second construction uses strong order. We fix a dominant integral weight
$\Lambda\in P^+$ and let $\Gamma_s(\Lambda)$ be the graph that has $\langle
\alpha_{v,w}^\vee, \Lambda \rangle$ edges between $v$ and $w$ whenever $v
\lessdot w$.

\begin{figure}[htbp]
\begin{center}
\begin{tikzpicture}[>=latex,line join=bevel,scale=0.75,every node/.style={font=\footnotesize}]
  \node (u0*u2*u1) at (18bp,212bp) [draw,draw=none] {$s_{0}s_{2}s_{1}$};
  \node (u2*u0*u1) at (86bp,212bp) [draw,draw=none] {$s_{2}s_{0}s_{1}$};
  \node (u0*u1*u0) at (182bp,212bp) [draw,draw=none] {$s_{0}s_{1}s_{0}$};
  \node (u2*u1*u0) at (236bp,212bp) [draw,draw=none] {$s_{2}s_{1}s_{0}$};
  \node (u1*u2*u1) at (291bp,212bp) [draw,draw=none] {$s_{1}s_{2}s_{1}$};
  \node (u1*u2*u0) at (346bp,212bp) [draw,draw=none] {$s_{1}s_{2}s_{0}$};
  \node (u0*u1*u2) at (428bp,212bp) [draw,draw=none] {$s_{0}s_{1}s_{2}$};
  \node (u1*u0*u2) at (489bp,212bp) [draw,draw=none] {$s_{1}s_{0}s_{2}$};
  \node (u0*u2*u0) at (551bp,212bp) [draw,draw=none] {$s_{0}s_{2}s_{0}$};
  \node (u0*u2) at (510bp,144bp) [draw,draw=none] {$s_{0}s_{2}$};
  \node (u1*u2) at (428bp,144bp) [draw,draw=none] {$s_{1}s_{2}$};
  \node (u2*u0) at (326bp,144bp) [draw,draw=none] {$s_{2}s_{0}$};
  \node (u1*u0) at (245bp,144bp) [draw,draw=none] {$s_{1}s_{0}$};
  \node (u0*u1) at (182bp,144bp) [draw,draw=none] {$s_{0}s_{1}$};
  \node (u2*u1) at (59bp,144bp) [draw,draw=none] {$s_{2}s_{1}$};
  \node (u1) at (205bp,76bp) [draw,draw=none] {$s_{1}$};
  \node (u0) at (298bp,76bp) [draw,draw=none] {$s_{0}$};
  \node (u2) at (455bp,76bp) [draw,draw=none] {$s_{2}$};
  \node (1) at (298bp,7bp) [draw,draw=none] {$1$};
  \definecolor{strokecol}{rgb}{0.0,0.0,0.0};
  \pgfsetstrokecolor{strokecol}
  \draw [black,->] (u2*u0) -- (u1*u2*u0);
  \draw [black,->] (u1*u0) -- (u2*u1*u0);
  \draw [black,->] (u0) -- (u1*u0);
  \draw [black,->] (u0) -- (u2*u0);
  \draw [black,->] (1) -- (u0);
  \draw [black,->] (u1*u0) -- (u0*u1*u0);
  \draw [black,->] (u0*u1) -- (u0*u1*u0);
  \draw [black,->] (u1) -- (u0*u1);
  \draw [black,->] (u0*u1) -- (u2*u0*u1);
  \draw [black,->] (u2*u0) -- (u0*u2*u0);
  \draw [black,->] (u0*u2) -- (u0*u2*u0);
  \draw [black,->] (u2) -- (u0*u2);
  \draw [black,->] (u0*u2) -- (u1*u0*u2);
  \draw [black,->] (1) -- (u2);
  \draw [black,->] (1) -- (u1);
  \draw [black,->] (u1) -- (u2*u1);
  \draw [black,->] (u2) -- (u1*u2);
  \draw [black,->] (u2*u1) -- (u1*u2*u1);
  \draw [black,->] (u1*u2) -- (u1*u2*u1);
  \draw [black,->] (u2*u1) -- (u0*u2*u1);
  \draw [black,->] (u1*u2) -- (u0*u1*u2);
\end{tikzpicture}

\vspace{2em}

\begin{tikzpicture}[>=latex,line join=bevel,scale=0.75,every node/.style={font=\footnotesize}]
  \node (u0*u2*u1) at (18bp,212bp) [draw,draw=none] {$s_{0}s_{2}s_{1}$};
  \node (u2*u0*u1) at (86bp,212bp) [draw,draw=none] {$s_{2}s_{0}s_{1}$};
  \node (u0*u1*u0) at (182bp,212bp) [draw,draw=none] {$s_{0}s_{1}s_{0}$};
  \node (u2*u1*u0) at (236bp,212bp) [draw,draw=none] {$s_{2}s_{1}s_{0}$};
  \node (u1*u2*u1) at (291bp,212bp) [draw,draw=none] {$s_{1}s_{2}s_{1}$};
  \node (u1*u2*u0) at (346bp,212bp) [draw,draw=none] {$s_{1}s_{2}s_{0}$};
  \node (u0*u1*u2) at (428bp,212bp) [draw,draw=none] {$s_{0}s_{1}s_{2}$};
  \node (u1*u0*u2) at (489bp,212bp) [draw,draw=none] {$s_{1}s_{0}s_{2}$};
  \node (u0*u2*u0) at (551bp,212bp) [draw,draw=none] {$s_{0}s_{2}s_{0}$};
  \node (u0*u2) at (510bp,144bp) [draw,draw=none] {$s_{0}s_{2}$};
  \node (u1*u2) at (428bp,144bp) [draw,draw=none] {$s_{1}s_{2}$};
  \node (u2*u0) at (326bp,144bp) [draw,draw=none] {$s_{2}s_{0}$};
  \node (u1*u0) at (245bp,144bp) [draw,draw=none] {$s_{1}s_{0}$};
  \node (u0*u1) at (182bp,144bp) [draw,draw=none] {$s_{0}s_{1}$};
  \node (u2*u1) at (59bp,144bp) [draw,draw=none] {$s_{2}s_{1}$};
  \node (u1) at (205bp,76bp) [draw,draw=none] {$s_{1}$};
  \node (u0) at (298bp,76bp) [draw,draw=none] {$s_{0}$};
  \node (u2) at (455bp,76bp) [draw,draw=none] {$s_{2}$};
  \node (1) at (298bp,7bp) [draw,draw=none] {$1$};
  \draw [black,->] (u2*u0) ..controls (360.19bp,125.98bp) and (411.18bp,99.098bp)  .. (u2);
  \definecolor{strokecol}{rgb}{0.0,0.0,0.0};
  \pgfsetstrokecolor{strokecol}
  \draw [black,->] (u2*u0*u1) -- (u2*u1);
  \draw [black,->] (u0*u2*u0) -- (u0*u2);
  \draw [black,->] (u2*u0) -- (u0);
  \draw [black,->] (u1*u0*u2) -- (u0*u2);
  \draw [black,->] (u1*u2*u0) -- (u1*u2);
  \draw [black,->] (u1*u0) -- (u0);
  \draw [black,->] (u0*u1*u0) -- (u0*u1);
  \draw [black,->] (u0) -- (1);
  \draw [black,->] (u1*u0) -- (u1);
  \draw [black,->] (u1*u0*u2) -- (u1*u2);
  \draw [black,->] (u0*u1) -- (u1);
  \draw [black,->] (u1*u2*u0) -- (u1*u0);
  \draw [black,->] (234bp,205bp) -- (243bp,150bp);
  \draw [black,->] (238bp,205bp) -- (247bp,150bp);
  \draw [black,->] (347bp,205bp) -- (329bp,150bp);
  \draw [black,->] (343bp,205bp) -- (325bp,150bp);
  \draw [black,->] (u2*u0*u1) -- (u0*u1);
  \draw [black,->] (u2*u1*u0) -- (u2*u0);
  \draw [black,->] (u0*u1*u2) -- (u1*u2);
  \draw [black,->] (u2*u1*u0) -- (u2*u1);
  \draw [black,->] (u0*u2*u1) -- (u2*u1);
  \draw [black,->] (u0*u2) -- (u2);
\end{tikzpicture}
\end{center}
\caption{The pair of dual graphs $\Gamma_w$ and $\Gamma_s(\Lambda_0)$,
truncated at the elements of length $3$, corresponding to the weak and strong
orders, respectively, for $k=2$.}
\label{fig:truncatedgraphs}
\end{figure}

The up and down operators for the dual graded graphs $\Gamma_w$ and
$\Gamma_s(\Lambda)$ induce operators on $\mathbb{A}$.
Specifically, define $U$ using the up operator on $\Gamma_w$,
\[ U(\uu_w) = \sum_{v \prec\!\cdot w} \uu_v,\]
and define $D_\Lambda$ using the down operator on $\Gamma_s(\Lambda)$,
\[D_\Lambda(\uu_w) = \sum_{v \lessdot w} \langle \alpha_{v,w}^\vee, \Lambda \rangle \, \uu_v. \]

It is clear from the definition and the bilinearity of the pairing $\langle \cdot ,\cdot \rangle$ that $D_{\Lambda_i + \Lambda_j} = D_{\Lambda_i} + D_{\Lambda_j}$. With this in mind, we will assume throughout this paper that $\Lambda$ is a fundamental weight. 

\begin{remark}
    Note that the operator $U$ can be realized as left-multiplication by
    $\mathbf{h}_1$ on $\mathbb{A}$. With this in mind, we define more
    generally $U_i(\uu) := \mathbf{h}_i \uu $ for $\uu \in \mathbb{A}$.
\end{remark}


\begin{remark}
Our notation differs slightly from that of \cite{LS}. Lam and Shimozono defined the operators $U$ and $D$ as operators on the opposite graphs; $D$ was defined on the weak order graph, and $U$ was defined on the strong order graph. 
Also, they define the weak order graph as depending on a central element from $\h$.
Here we just assume that this element is $c$, as it is the unique central element up to a scalar. 

\end{remark}

\begin{thm}[\cite{LS}, Theorem 2.3]
    \label{thm:dgg}
For a fundamental weight $\Lambda$, the graphs $\Gamma_w$ and $\Gamma_s(\Lambda)$ are dual graded graphs with differential coefficient 1. In other words, $D_\Lambda U - UD_\Lambda = Id$. 
\end{thm}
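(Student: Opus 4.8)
The plan is to realize the down operator $D_\Lambda$ as an inner operator in the affine nilHecke algebra and thereby reduce the Heisenberg relation to a short formal computation. Let $\widehat{\mathbb{A}}$ denote the affine nilHecke algebra, generated by $\mathbb{A}$ together with $\hstar$ (placed in degree one) subject to the straightening relation
\[
\uu_i\,\Lambda = (s_i\Lambda)\,\uu_i + \langle \alpha_i^\vee, \Lambda\rangle \qquad (i\in I,\ \Lambda\in\hstar).
\]
First I would prove, by induction on $\ell(w)$ using this relation along a reduced word, the Leibniz-type identity
\[
\uu_w\,\Lambda = (w\Lambda)\,\uu_w + \sum_{v\lessdot w} \langle \alpha_{v,w}^\vee,\Lambda\rangle\,\uu_v .
\]
The sum on the right is exactly $D_\Lambda(\uu_w)$, so this identity realizes $D_\Lambda$ as $D_\Lambda = R_\Lambda - \Theta$, where $R_\Lambda$ is right multiplication by $\Lambda$ and $\Theta$ is the diagonal operator $\Theta(\uu_w) = (w\Lambda)\,\uu_w$. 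In particular $D_\Lambda(\uu_w)=\uu_w\Lambda-(w\Lambda)\uu_w$ is the polynomial-degree-zero component of $\uu_w\Lambda$ and hence lands back in $\mathbb{A}$, as it must.

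Next, recalling from the Remark that $U$ is left multiplication by $\mathbf{h}_1$, I would compute the commutator on a basis element $\uu_w$ using $D_\Lambda = R_\Lambda - \Theta$. Because $R_\Lambda$ is right multiplication and $U$ is left multiplication, associativity makes the two occurrences of $R_\Lambda$ cancel, leaving
\[
(D_\Lambda U - U D_\Lambda)(\uu_w) = \bigl(\mathbf{h}_1\,\Theta - \Theta\,\mathbf{h}_1\bigr)(\uu_w).
\]
Applying the straightening relation to each term of $\mathbf{h}_1 \Theta(\uu_w) = \sum_i \uu_i (w\Lambda)\uu_w$ yields $\sum_i s_i(w\Lambda)\,\uu_i\uu_w + \sum_i \langle \alpha_i^\vee, w\Lambda\rangle\,\uu_w$. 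Since $\uu_i\uu_w = 0$ whenever $\ell(s_iw)<\ell(w)$ and $s_i(w\Lambda)=(s_iw)\Lambda$, the first sum is precisely $\Theta(\mathbf{h}_1 \uu_w)$; these degree-one contributions therefore cancel against $\Theta\mathbf{h}_1(\uu_w)$, and only the scalar term survives:
\[
(D_\Lambda U - U D_\Lambda)(\uu_w) = \Bigl(\sum_{i\in I}\langle \alpha_i^\vee, w\Lambda\rangle\Bigr)\uu_w = \langle c, w\Lambda\rangle\,\uu_w.
\]

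To finish, since $c=\sum_i\alpha_i^\vee$ is $W$-invariant, the invariance \eqref{eqn:w} of the pairing gives $\langle c, w\Lambda\rangle = \langle w^{-1}(c),\Lambda\rangle = \langle c,\Lambda\rangle$, and for the fundamental weight $\Lambda=\Lambda_m$ one has $\langle c,\Lambda_m\rangle = \sum_i \langle\alpha_i^\vee,\Lambda_m\rangle = \sum_i \delta_{im} = 1$. Hence $(D_\Lambda U - UD_\Lambda)(\uu_w)=\uu_w$ for every $w$, which is the claim.

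I expect the main obstacle to be verifying the Leibniz identity with the correct sign and positivity conventions, so that the coefficient attached to each cover $v\lessdot w$ is genuinely $\langle\alpha_{v,w}^\vee,\Lambda\rangle$ for the paper's root $\alpha_{v,w}$ (the positive real root with $v^{-1}w=s_{\alpha_{v,w}}$); this is standard but bookkeeping-heavy. As a sanity check one can extract the coefficient of $\uu_w$ directly from the combinatorial definitions: up-then-down walks returning to $w$ contribute $\sum_{i:\ell(s_iw)>\ell(w)}\langle\alpha_i^\vee,w\Lambda\rangle$ and down-then-up walks contribute $\sum_{j:\ell(s_jw)<\ell(w)}\langle\alpha_j^\vee,w\Lambda\rangle$ (the sign flip coming from the positivity convention on $\alpha_{v,w}$), and these add to $\langle c,w\Lambda\rangle=1$. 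A purely combinatorial proof avoiding $\widehat{\mathbb{A}}$ would instead have to establish the off-diagonal cancellation for $x\neq w$ with $\ell(x)=\ell(w)$, that is, a diamond lemma matching the length-two galleries $w\weakcover y\gtrdot x$ against $w\gtrdot v\weakcover x$ with equal weights; I regard that bijection as the real combinatorial content which the nilHecke computation packages automatically.
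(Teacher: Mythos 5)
Your proof is correct, but it takes a genuinely different route from the one in the paper. Note first that the paper never proves Theorem \ref{thm:dgg} itself: the statement is quoted from \cite{LS}, and what the paper actually proves is the generalization in Theorem \ref{thm:bracket}, whose $i=1$ case is exactly this theorem (since $\mathbf{h}_0=1$). That proof, explicitly following the technique of \cite{LS}, stays entirely inside $\mathbb{A}$: it splits the Bruhat covers of $w_Sw$ into those of the form $w_Sv'$ with $v'\lessdot w$ and those of the form $w_Tw$ with $T\subset S$, transports the coroots using Lemma \ref{lemma:coroots}, and evaluates the surviving sum to $\langle c,w\Lambda\rangle=1$ via Lemma \ref{lemma:coeff} and Equation (\ref{eqn:w}). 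You instead pass to the affine nilHecke ring, where $D_\Lambda(\uu_w)=\uu_w\Lambda-(w\Lambda)\uu_w$, after which the commutator collapses by a formal computation; your steps check out ($\uu_i\uu_w=0$ when $\ell(s_iw)<\ell(w)$, $s_i(w\Lambda)=(s_iw)\Lambda$, and $\langle c,w\Lambda\rangle=\langle c,\Lambda\rangle=1$ since $w(c)=c$), as does your signed diagonal sanity check. What your route buys: the off-diagonal cancellation --- the matching of galleries $w\weakcover y\gtrdot x$ against $w\gtrdot v\weakcover x$, which you rightly identify as the real combinatorial content --- is absorbed once and for all into the Leibniz identity, and the differential coefficient is transparently the level $\langle c,\Lambda\rangle$. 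What it costs: the inductive proof of the Leibniz identity requires exactly the ingredients of the paper's argument (the lifting property of Bruhat order, to show the covers of $s_iu$ are $u$ together with the elements $s_iv$ for $v\lessdot u$ with $s_iv>v$, plus the coroot identities of Lemma \ref{lemma:coroots} with the correct positivity), and you must also invoke the Kostant--Kumar fact that the nilHecke ring is free as a module over $\mathrm{Sym}(\hstar)$ with basis $\{\uu_w\}$, so that an identity proved there descends to an identity in $\mathbb{A}$; this should be stated explicitly rather than left implicit in the phrase ``polynomial-degree-zero component''. Finally, your method extends with no extra work to the paper's Theorem \ref{thm:bracket}: iterating the straightening relation along $\uu_S$ gives $\uu_S\mu=(w_S\mu)\uu_S+\sum_{T}\langle\alpha^\vee_{w_T,w_S},\mu\rangle\uu_T$, and Lemma \ref{lemma:coeff} then produces the $\mathbf{h}_{i-1}\uu_w$ term, so nothing the paper needs is lost by taking your route.
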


\begin{eg}\label{ex:dualgraphs}
    Fix $k=2$ and let $\Lambda = \Lambda_0$. Using Figure
    \ref{fig:truncatedgraphs}, we compute
    \begin{eqnarray*}
     D_\Lambda(U(\uu_1\uu_0)) \ =& D_\Lambda(\uu_0 \uu_1 \uu_0)  + D_\Lambda(\uu_2 \uu_1 \uu_0) &= (\uu_0 \uu_1) + (2 \cdot \uu_1 \uu_0 + \uu_2 \uu_0 + \uu_2 \uu_1),
     \\
     U(D_\Lambda(\uu_1\uu_0)) \ =& U(\uu_0) + U(\uu_1) &= (\uu_1 \uu_0 + \uu_2 \uu_0) + (\uu_0 \uu_1 + \uu_2 \uu_1),
    \end{eqnarray*}
    whence we conclude that $(D_\Lambda U-UD_\Lambda) (\uu_1\uu_0) = \uu_1\uu_0$.
 
\end{eg}

\subsection{Properties of the Lam-Shimozono down operator}

We start this section with several lemmas important to the main theorems of this section. The first lemma is a type $A$ version of Lam, Shilling and Shimozono \cite[Proposition 6.1]{LSS} and Pon \cite[Proposition 5.17]{P}.

\begin{lemma}\label{lemma:coeff}
For every $T \subset \{0,1, \dots, k \}$,
 \[ \displaystyle \sum_{\substack{T\subset S \\ |S| = |T| + 1}} \alpha^\vee_{w_T, w_S} = c.\]
 \end{lemma}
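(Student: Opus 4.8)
The plan is to reduce the identity to an explicit formula for each coroot $\alpha^\vee_{w_T,w_S}$ and then carry out a counting argument. Since the index set consists of the sets $S=T\cup\{j\}$ with $j\in I\setminus T$, and since cyclically decreasing words satisfy $\ell(w_S)=|S|$, we have $\ell(w_S)=\ell(w_T)+1$; hence each $w_T\lessdot w_S$ is a genuine Bruhat covering and $\alpha_{w_T,w_S}$ is defined (I assume throughout that $S\subsetneq I$, so that $w_S$ exists; equivalently $|T|\le k-1$). The heart of the argument is the explicit formula
\[ \alpha^\vee_{w_T,w_S} \;=\; \sum_{i\in R_j}\alpha_i^\vee, \]
where $R_j=\{a,a+1,\dots,j\}$ (residues mod $k+1$) is the maximal block of consecutive residues ending at $j$ all of which lie in $S$ — that is, $\{j\}$ together with the run of $T$ immediately below $j$.

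To establish this formula I would exploit the block structure of cyclically decreasing words: $w_S$ factors as a commuting product of decreasing segments $s_bs_{b-1}\cdots s_a$, one for each maximal run $\{a,\dots,b\}$ of $S$. Let $\{a,\dots,b\}$ be the full maximal run of $S$ containing $j$ (so that $R_j=\{a,\dots,j\}$). Adding $j$ either creates, one-sidedly extends, or merges runs of $T$, but in every case one can rearrange the commuting factors to write $w_S=(s_b\cdots s_a)\,C$ and $w_T=(s_b\cdots s_{j+1})(s_{j-1}\cdots s_a)\,C$ with the same tail $C$ collecting all the other runs (empty upper or lower factors cover the non-merging cases). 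A direct telescoping cancellation then collapses $w_T^{-1}w_S$ to the conjugate reflection $(rC)^{-1}s_j(rC)$ with $r=s_{j-1}\cdots s_a$, whence $\alpha^\vee_{w_T,w_S}=(rC)^{-1}(\alpha_j^\vee)$. Applying $r^{-1}=s_a\cdots s_{j-1}$ telescopes $\alpha_j^\vee$ to $\sigma:=\sum_{i=a}^{j}\alpha_i^\vee$, and one checks that every generator $s_m$ appearing in $C$ fixes $\sigma$ (indeed $\langle\sigma,\alpha_m\rangle=0$, since $m$ lies outside the block $R_j$ and is separated from its boundary residues $a-1,j+1$ by the gaps of $S$). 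This gives the formula.

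Finally I would sum over $j\in I\setminus T$ and group the terms by which simple coroot $\alpha_i^\vee$ occurs. If $i\notin T$, then $\alpha_i^\vee$ can only appear for $j=i$, as the top element of $R_i$. If $i\in T$, then $\alpha_i^\vee$ appears precisely when $i$ lies in the run of $T$ below $j$, which forces $j$ to be the first residue above $i$ not belonging to $T$; since $T\subsetneq I$ this $j$ exists and is unique. In either case $\alpha_i^\vee$ is counted exactly once, so the total is $\alpha_0^\vee+\cdots+\alpha_k^\vee=c$.

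I expect the main obstacle to be the coroot computation of the second paragraph: justifying the commuting rearrangement of factors and the cancellation uniformly across the three cases (new run, one-sided extension, and the merging of two runs of $T$), together with the cyclic bookkeeping needed to guarantee that no run wraps around. This is exactly where the hypothesis $S\subsetneq I$ (equivalently run length $\le k$) is used, ensuring that $a,a+1,\dots,j$ are genuinely distinct residues and that the telescoping of $r^{-1}(\alpha_j^\vee)$ never reintroduces an adjacency at the far end of the block.
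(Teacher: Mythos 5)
Your proof is correct and follows essentially the same route as the paper: both identify $\alpha^\vee_{w_T,w_S}$ as the block sum $\alpha_a^\vee+\cdots+\alpha_j^\vee$ over the maximal run of $S$ ending at the added letter $j$, and then observe that these blocks tile $I = \{0,1,\dots,k\}$, so each simple coroot is counted exactly once. The only difference is one of detail: the paper asserts the conjugation formula $w_T^{-1}w_S = s_a s_{a+1}\cdots s_j\cdots s_{a+1}s_a$ with ``one may check,'' whereas you verify it (together with the implicit hypothesis $S\subsetneq I$) via the commuting-run factorization of cyclically decreasing elements and a telescoping computation.
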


\begin{proof}
Let $j = S \setminus T$ and let $i$ be such that $[i,j] \subset S$ but $i-1 \not \in S$. One may check that 
$ w_T^{-1}w_S = 
                s_i s_{i+1}\cdots s_j \cdots s_{i+1} s_i$.
                Therefore $\alpha_{w_T, w_S}^\vee = \alpha_i^\vee + \alpha_{i+1}^\vee + \dots + \alpha_{j-1}^\vee + \alpha_j^\vee$. Hence \[ \sum_{\substack{T\subset S \\ |S| = |T| + 1}} \alpha^\vee_{w_T, w_S} = \sum_{j \not \in T} \alpha_{w_T, w_{T\cup{j}}}^\vee = \sum_{j \not \in T} \alpha_i^\vee + \dots + \alpha_j^\vee =\sum_i \alpha_i^\vee = c.\]

\end{proof}

\begin{lemma}\label{lemma:coroots}
Let $u,v,w \in W$ with $v \lessdot w$. Then
\begin{align*}
\alpha_{uv,uw}^\vee = \alpha_{v,w}^\vee;\\
\alpha_{vu, wu}^\vee = u^{-1}\alpha_{v,w}^\vee.
\end{align*}
\end{lemma}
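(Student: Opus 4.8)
The plan is to reduce everything to two facts about the root system: the conjugation rule for reflections, $x\,s_\beta\,x^{-1} = s_{x(\beta)}$, recorded just after the definition of $s_\alpha$, and the $W$-equivariance of the coroot map, $\bigl(x(\beta)\bigr)^\vee = x(\beta^\vee)$ for a real root $\beta$. Throughout I use the convention that $\alpha_{v,w}$ is the \emph{positive} root with $v^{-1}w = s_{\alpha_{v,w}}$ (this is what makes it unique, since $s_\beta = s_{-\beta}$), and I work under the implicit hypothesis that $uv \lessdot uw$ and $vu \lessdot wu$ are themselves covers, as is needed for the notation $\alpha_{uv,uw}$ and $\alpha_{vu,wu}$ to be defined.

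For the first identity I would simply compute $(uv)^{-1}(uw) = v^{-1}u^{-1}uw = v^{-1}w = s_{\alpha_{v,w}}$. Thus the pairs $(v,w)$ and $(uv,uw)$ determine the very same reflection, hence the same positive root, so $\alpha_{uv,uw} = \alpha_{v,w}$, and applying the coroot map gives $\alpha_{uv,uw}^\vee = \alpha_{v,w}^\vee$. No sign question arises because the reflection is literally unchanged.

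For the second identity I would compute $(vu)^{-1}(wu) = u^{-1}(v^{-1}w)u = u^{-1}s_{\alpha_{v,w}}u = s_{u^{-1}(\alpha_{v,w})}$ using the conjugation rule. Hence $\alpha_{vu,wu} = \epsilon\,u^{-1}(\alpha_{v,w})$ for some sign $\epsilon = \pm 1$, and by $W$-equivariance of the coroot map together with $(-\beta)^\vee = -\beta^\vee$ we get $\alpha_{vu,wu}^\vee = \epsilon\,u^{-1}(\alpha_{v,w}^\vee)$. The one real point is to check $\epsilon = +1$, and this is where the covering hypotheses are indispensable: since $v \lessdot w$, the standard length criterion $\ell(vs_\beta) > \ell(v) \Leftrightarrow v(\beta) > 0$ gives $v(\alpha_{v,w}) > 0$, and likewise $vu \lessdot wu$ gives $(vu)(\alpha_{vu,wu}) > 0$. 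Applying $u$ to $\alpha_{vu,wu} = \epsilon\,u^{-1}(\alpha_{v,w})$ yields $u(\alpha_{vu,wu}) = \epsilon\,\alpha_{v,w}$, so $(vu)(\alpha_{vu,wu}) = \epsilon\,v(\alpha_{v,w})$; since both $v(\alpha_{v,w})$ and $(vu)(\alpha_{vu,wu})$ are positive, we must have $\epsilon = +1$, giving $\alpha_{vu,wu}^\vee = u^{-1}(\alpha_{v,w}^\vee)$. I expect pinning this sign to be the only delicate part; the rest is bookkeeping with the conjugation and equivariance identities.
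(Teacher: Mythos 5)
Your proof is correct and follows essentially the same route as the paper's: both identities are reduced to the conjugation computations $(uv)^{-1}(uw) = v^{-1}w$ and $(vu)^{-1}(wu) = u^{-1}s_{\alpha_{v,w}}u = s_{u^{-1}(\alpha_{v,w})}$. You are in fact more careful than the paper, whose proof stops at the equality of reflections and silently passes over the sign ambiguity $s_\beta = s_{-\beta}$ that your length-criterion argument (using the covering hypotheses) explicitly resolves.
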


\begin{proof}
For the first statement, \[s_{\alpha_{uv, uw}} = (uv)^{-1} uw = v^{-1} w = s_{\alpha_{v,w}}.\]
For the second statement \[s_{\alpha_{vu, wu}} = (vu)^{-1}wu = u^{-1} v^{-1} w u = u^{-1} s_{\alpha_{v,w}} u =  s_{u^{-1}(\alpha_{v,w})}.\] 
\end{proof}

\begin{lemma}\label{lemma:fixedaction}
If $w \in W_j$ then $w \Lambda_j = \Lambda_j$. Furthermore, if $v,w \in W_j$, then $\langle\alpha_{v,w}^\vee, \Lambda_j \rangle= 0$.
\end{lemma}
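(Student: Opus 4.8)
The plan is to deduce both statements directly from the action formula \eqref{eqn:action} together with the defining property $\langle \alpha_i^\vee, \Lambda_j \rangle = \delta_{ij}$ of the fundamental weight, with the first statement feeding into the second.

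First I would establish $w\Lambda_j = \Lambda_j$ for $w \in W_j$. It suffices to check this on the generators of $W_j$, which are the $s_i$ with $i \neq j$. For such a generator, \eqref{eqn:action} gives
$s_i(\Lambda_j) = \Lambda_j - \langle \alpha_i^\vee, \Lambda_j \rangle \alpha_i = \Lambda_j - \delta_{ij}\alpha_i = \Lambda_j$,
since $i \neq j$. Thus each generator of $W_j$ fixes $\Lambda_j$, and because $W_j$ is generated by these reflections, a one-line induction on the length of $w \in W_j$ shows that every element of $W_j$ fixes $\Lambda_j$.

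For the second statement, note that since $\alpha_{v,w}$ is only defined for a covering $v \lessdot w$, we have $v^{-1}w = s_{\alpha_{v,w}}$, the reflection in the root $\alpha_{v,w}$. As $v,w \in W_j$ and $W_j$ is a subgroup, this reflection lies in $W_j$, so the first statement yields $s_{\alpha_{v,w}}\Lambda_j = \Lambda_j$. To exploit this I would record the reflection action formula for an arbitrary real root: writing $\alpha = u(\alpha_i)$ and $\alpha^\vee = u(\alpha_i^\vee)$, one has $s_\alpha(\lambda) = \lambda - \langle \alpha^\vee, \lambda \rangle \alpha$, which follows from \eqref{eqn:action} after conjugating by $u$ and invoking the $W$-invariance \eqref{eqn:w} of the pairing to rewrite $\langle \alpha_i^\vee, u^{-1}\lambda\rangle$ as $\langle \alpha^\vee, \lambda\rangle$. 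Applying this with $\lambda = \Lambda_j$ and $\alpha = \alpha_{v,w}$ gives $\Lambda_j = s_{\alpha_{v,w}}\Lambda_j = \Lambda_j - \langle \alpha_{v,w}^\vee, \Lambda_j \rangle\, \alpha_{v,w}$, hence $\langle \alpha_{v,w}^\vee, \Lambda_j \rangle\, \alpha_{v,w} = 0$. Since $\alpha_{v,w}$ is a nonzero root, I conclude $\langle \alpha_{v,w}^\vee, \Lambda_j \rangle = 0$.

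The only step that is not an immediate substitution is the verification of the reflection action formula for a general, possibly non-simple, real root $\alpha$. I expect the $W$-invariance identity \eqref{eqn:w} to be exactly what makes this go through, so this is a mild bookkeeping point rather than a genuine obstacle; once it is in place, both claims drop out at once.
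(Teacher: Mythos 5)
Your proof is correct, and for the second statement it takes a genuinely different route from the paper's. The first halves coincide: both check $s_i\Lambda_j = \Lambda_j$ for $i \neq j$ on generators using \eqref{eqn:action}. For the second half, the paper writes the reflection as $v^{-1}w = u s_i u^{-1}$, deduces $\alpha_{v,w}^\vee = u(\alpha_i^\vee)$, and moves $u$ across the pairing: $\langle u(\alpha_i^\vee), \Lambda_j \rangle = \langle \alpha_i^\vee, u^{-1}\Lambda_j \rangle = \langle \alpha_i^\vee, \Lambda_j \rangle = 0$. Note that this chain silently requires $u \in W_j$ and $i \neq j$ (i.e., the standard fact that a reflection lying in the standard parabolic subgroup $W_j$ is conjugate \emph{within} $W_j$ to one of its simple generators); for a completely arbitrary $u \in W$, as literally written, the step $\langle \alpha_i^\vee, u^{-1}\Lambda_j \rangle = \langle \alpha_i^\vee, \Lambda_j \rangle$ can fail. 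Your argument sidesteps this point entirely: you use only that $v^{-1}w$ lies in the subgroup $W_j$, hence fixes $\Lambda_j$ by the first statement, and then read the coefficient off the general reflection formula $s_\alpha(\lambda) = \lambda - \langle \alpha^\vee, \lambda \rangle \alpha$, which you correctly derive from \eqref{eqn:action} and \eqref{eqn:w}; since $\alpha_{v,w}$ is a nonzero vector of $\hstar_\R$, the scalar $\langle \alpha_{v,w}^\vee, \Lambda_j \rangle$ must vanish. What each approach buys: yours is self-contained given the two displayed equations of the paper and needs no parabolic-conjugacy input, while the paper's is shorter on the page but leans on that unstated Coxeter-theoretic fact -- and the computation it does perform (conjugation plus invariance of the pairing) is essentially the same one you carry out to establish the reflection formula.
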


\begin{proof}
For the first statement it suffices to show this on generators. But $s_i(\Lambda_j) = \Lambda_j$ if $i \neq j$ by Equation (\ref{eqn:action}). For the second statement, if $s_{\alpha_{v,w}} = v^{-1}w = u s_i u^{-1}$ for some $u \in W$ then $\alpha_{v,w}^\vee = u (\alpha_i^\vee)$ so \[\langle \alpha_{v,w}^\vee, \Lambda_j \rangle = \langle u (\alpha_i^\vee) , \Lambda_j \rangle = \langle  \alpha_i^\vee , u^{-1} \Lambda_j \rangle = \langle \alpha_i^\vee, \Lambda_j \rangle = 0\] by the orthogonality of simple coroots and fundamental weights.
\end{proof}

We further develop properties of the operator $D_\Lambda$. Our
first observation is a generalization of the Heisenberg relation in Theorem
\ref{thm:dgg}.

\begin{thm}
    \label{thm:bracket}
    Let $\Lambda$ be a fundamental weight.
    For all $w \in W$,
    $$D_\Lambda(\mathbf{h}_i\uu_w) = \mathbf{h}_{i-1} \uu_w + \mathbf{h}_i
    D_\Lambda(\uu_w).$$
    In particular, $D_\Lambda(\mathbf{h}_i) = \mathbf{h}_{i-1}$
    and
        $$D_\Lambda \circ U_i - U_i \circ D_\Lambda = U_{i-1}.$$
\end{thm}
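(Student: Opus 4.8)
The plan is to deduce both ``in particular'' statements from the product formula on the first line. Indeed, setting $w = e$ (so $\uu_w = 1$ and $D_\Lambda(1) = 0$, since the identity covers nothing below it) gives $D_\Lambda(\mathbf{h}_i) = \mathbf{h}_{i-1}$; and rewriting the product formula as $D_\Lambda(U_i \uu_w) - U_i(D_\Lambda \uu_w) = U_{i-1}\uu_w$ and extending by linearity over $\mathbb{A}$ yields $D_\Lambda\circ U_i - U_i\circ D_\Lambda = U_{i-1}$. The engine for the product formula will be a Leibniz-type rule valid for an arbitrary (possibly non length-additive) product in $\mathbb{A}$: for all $y,w \in W$,
$$ D_\Lambda(\uu_y \uu_w) \;=\; \Big(\sum_{x \lessdot y} \langle \alpha^\vee_{x,y},\, w\Lambda\rangle\, \uu_x\Big)\uu_w \;+\; \uu_y\, D_\Lambda(\uu_w), $$
where each $\uu_x\uu_w$ is computed in $\mathbb{A}$ and may therefore vanish.

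Granting this rule, I would sum it over $y = w_S$ with $|S| = i$. The second summand immediately assembles into $\mathbf{h}_i D_\Lambda(\uu_w)$. For the first summand I would use that the Bruhat covers of a cyclically decreasing element $w_S$ are exactly the elements $w_{S\setminus\{p\}}$ for $p\in S$: deleting the unique occurrence of $s_p$ from the cyclically decreasing reduced word of $w_S$ again produces a cyclically decreasing word, so $x \lessdot w_S$ forces $x = w_T$ with $T = S\setminus\{p\} \subset S$ of size $i-1$. Regrouping by $T$ and invoking Lemma \ref{lemma:coeff} gives
$$ \sum_{|S|=i}\sum_{x\lessdot w_S}\langle \alpha^\vee_{x,w_S},\, w\Lambda\rangle\,\uu_x\uu_w \;=\; \sum_{|T|=i-1}\Big\langle \sum_{S\supset T}\alpha^\vee_{w_T,w_S},\; w\Lambda\Big\rangle\,\uu_{w_T}\uu_w \;=\; \sum_{|T|=i-1}\langle c,\, w\Lambda\rangle\,\uu_{w_T}\uu_w. $$
Since $c$ is $W$-invariant, $\langle c, w\Lambda\rangle = \langle w^{-1}c,\Lambda\rangle = \langle c,\Lambda\rangle$ by (\ref{eqn:w}), and for the fundamental weight $\Lambda = \Lambda_j$ one has $\langle c,\Lambda_j\rangle = \langle \sum_i \alpha_i^\vee,\Lambda_j\rangle = 1$; hence the first summand equals $\mathbf{h}_{i-1}\uu_w$, completing the product formula.

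It remains to prove the Leibniz rule. When $\ell(yw) = \ell(y) + \ell(w)$ the product is reduced, $\uu_y\uu_w = \uu_{yw}$, and I would use the standard decomposition of the Bruhat covers of a length-additive product $yw$ into those of the form $xw$ with $x \lessdot y$ and those of the form $yw'$ with $w'\lessdot w$ (each arising from deleting a letter in the $y$-part, respectively the $w$-part, of a reduced word). Lemma \ref{lemma:coroots} then converts the coroots: right multiplication by $w$ sends $\alpha^\vee_{x,y}$ to $w^{-1}\alpha^\vee_{x,y}$, so $\langle \alpha^\vee_{xw,yw},\Lambda\rangle = \langle\alpha^\vee_{x,y}, w\Lambda\rangle$, while left multiplication by $y$ fixes $\alpha^\vee_{w',w}$. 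On both sides the surviving terms are precisely those whose $\mathbb{A}$-products are reduced, so the two expressions agree term by term.

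The main obstacle is the non length-additive case $\ell(yw) < \ell(y)+\ell(w)$, where $\uu_y\uu_w = 0$ and one must show the right-hand side of the Leibniz rule also vanishes, i.e.
$$ \sum_{x\lessdot y}\langle\alpha^\vee_{x,y},\, w\Lambda\rangle\,\uu_x\uu_w \;=\; -\,\uu_y\!\sum_{w'\lessdot w}\langle\alpha^\vee_{w',w},\Lambda\rangle\,\uu_{w'}. $$
Here I would exploit that a non-reduced product admits a common descent, namely a simple reflection $s_a$ with $ys_a < y$ and $s_aw < w$, in order to set up a sign-reversing matching between the surviving terms $x \lessdot y$ with $\uu_x\uu_w \neq 0$ and the surviving terms $w' \lessdot w$ with $\uu_y\uu_{w'}\neq 0$ that yield the same basis element of $\mathbb{A}$; the equality of the two coefficients is where I expect to need the pairing identities of Lemma \ref{lemma:fixedaction}. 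This bookkeeping of vanishing products in the nilCoxeter algebra, rather than any single inequality, is the delicate heart of the argument.
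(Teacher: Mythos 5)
Your plan has the same skeleton as the paper's proof: expand $\mathbf{h}_i$ over cyclically decreasing elements, split the covers of a product according to which factor loses a letter, convert coroots with Lemma \ref{lemma:coroots}, and finish with Lemma \ref{lemma:coeff} and Equation (\ref{eqn:w}) via $\langle c, w\Lambda\rangle = 1$. Your summation step (using that the covers of $w_S$ are exactly the $w_{S\setminus\{p\}}$) and your length-additive case are correct, and to your credit you make explicit the bookkeeping of vanishing products in $\mathbb{A}$ that the paper's write-up passes over in silence. But precisely at the point you yourself call the delicate heart --- showing that the right-hand side of your Leibniz rule vanishes when $\ell(yw)<\ell(y)+\ell(w)$ --- there is a genuine gap, and both tools you name for closing it fail. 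The claim that a non-reduced product admits a common \emph{simple} descent is false: for any $k\geq 2$ take $y=w=s_1s_2$; then $yw=s_1s_2s_1s_2=s_2s_1$ has length $2<4$, yet the unique right descent of $y$ is $s_2$ while the unique left descent of $w$ is $s_1$. (The deletion property does produce a \emph{reflection} $t$ with $yt\lessdot y$ and $tw\lessdot w$, but not a simple one.) And Lemma \ref{lemma:fixedaction} is not the relevant ingredient: it concerns covers inside a parabolic $W_j$ paired against $\Lambda_j$, and contributes nothing to this cancellation.

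What the cancellation actually requires is the root/length dichotomy: for a positive root $\beta$ and $u\in W$, $\ell(us_\beta)<\ell(u)$ if and only if $u\beta<0$. Given $x\lessdot y$ with $\uu_x\uu_w\neq 0$, set $w':=s_{\alpha_{x,y}}w=y^{-1}xw$, so that $yw'=xw$. Since $y=xs_{\alpha_{x,y}}$ is longer than $x$, one has $x\alpha_{x,y}>0$; if $\ell(w')>\ell(w)$, then $w^{-1}\alpha_{x,y}>0$, and since $yw=(xw)\,s_{w^{-1}\alpha_{x,y}}$ is shorter than $xw$ (by hypothesis $\ell(yw)\leq\ell(y)+\ell(w)-2=\ell(xw)-1$), the dichotomy would force $(xw)(w^{-1}\alpha_{x,y})=x\alpha_{x,y}<0$, a contradiction; hence $w'\lessdot w$ and $\uu_y\uu_{w'}=\uu_x\uu_w\neq0$. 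Then $\ell(s_{\alpha_{x,y}}w)<\ell(w)$ gives $w^{-1}\alpha_{x,y}<0$, so the positive root of the cover $w'\lessdot w$ is $\alpha_{w',w}=-w^{-1}\alpha_{x,y}$, whence $\langle\alpha_{w',w}^\vee,\Lambda\rangle=-\langle\alpha_{x,y}^\vee,w\Lambda\rangle$: matched terms cancel, and the symmetric argument matches every surviving $w'$ with an $x$. None of this appears in your proposal, so as written the proof is open exactly at its crux. The same sign analysis, incidentally, is what legitimizes your appeal to Lemma \ref{lemma:coroots} in the length-additive case, since that lemma pins down the coroot only up to sign; there, length-additivity of $yw$ yields $w^{-1}\alpha_{x,y}>0$ instead.
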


\begin{proof} We follow the technique of Lam and Shimozono \cite[Theorem 2.3]{LS}.
We compute \[\displaystyle D_\Lambda( \mathbf{h}_i \uu_w ) = \sum_{\substack{S \subset I \\ |S| = i}}D_\Lambda( \uu_S \uu_w) = \sum_{\substack{S\subset I\\ |S| = i}} \sum_{v \lessdot w_Sw} \langle \alpha_{v, w_Sw}^\vee, \Lambda \rangle \uu_v = \] \[ \sum_{\substack{S\subset I\\ |S| = i}} \sum_{v' \lessdot w} \langle \alpha_{w_Sv', w_Sw}^\vee, \Lambda \rangle \uu_{w_Sv'} + \sum_{\substack{S\subset I\\ |S| = i}} \sum_{\substack{T \subset S \\ |T| = |S|-1 }} 
\langle \alpha_{w_Tw, w_Sw}^\vee, \Lambda \rangle \uu_{w_Tw}\]

We deal with the two summands individually. 
 
By the first statement of Lemma \ref{lemma:coroots}, the first summand becomes \[  \sum_{\substack{S\subset I\\ |S| = i}} \sum_{v' \lessdot w} \langle \alpha_{v', w}^\vee, \Lambda \rangle \uu_{w_Sv'} =  \sum_{\substack{S\subset I\\ |S| = i}} w_S  \sum_{v' \lessdot w} \langle \alpha_{v', w}^\vee, \Lambda \rangle \uu_{v'} = \mathbf{h}_i D_\Lambda(\uu_w).\] 

By the second statement of Lemma \ref{lemma:coroots}, the second summand becomes
\[ 
\sum_{\substack{S\subset I\\ |S| = i}} \sum_{\substack{T \subset S \\ |T| = |S|-1 }} 
\langle w^{-1}\alpha_{w_T, w_S}^\vee, \Lambda \rangle \uu_{w_Tw} = \sum_{\substack{S\subset I\\ |S| = i}} \sum_{\substack{T \subset S \\ |T| = |S|-1 }} 
\langle \alpha_{w_T, w_S}^\vee, w\Lambda \rangle \uu_{w_Tw}
 = \]
 \[ 
 \sum_{\substack{ T \subset I \\|T| = i-1}} \sum_{\substack{T \subset S \\ |S| = |T|+1}} \langle \alpha_{w_T, w_S}^\vee, w\Lambda \rangle  \uu_T \uu_w = \sum_{\substack{ T \subset I \\|T| = i-1}} \langle \sum_{\substack{T \subset S\\ |S| = |T| + 1}} \alpha_{w_T, w_S}^\vee, w\Lambda \rangle  \uu_T \uu_w= 
 \]
 \[ 
 \sum_{\substack{ T \subset I \\ |T| = i-1}} \langle c, w \Lambda \rangle  \uu_T \uu_w =  \sum_{\substack{T \subset I \\ |T| = i-1}} \uu_T  \uu_w=  \mathbf{h}_{i-1} \uu_w.
 \]
Here we used Lemma \ref{lemma:coeff} and Equation (\ref{eqn:w}).
\end{proof}

Next, we study the restrictions of the operators $D_\Lambda$ to the affine
Fomin-Stanley subalgebra $\mathbb{B}$. The following theorem implies that
although the operators $D_\Lambda$, for distinct fundamental weights $\Lambda$,
are distinct on $\mathbb{A}$, their restrictions to the affine Fomin-Stanley
subalgebra $\mathbb{B}$ coincide. In fact, the action of $D_\Lambda$ on
$\mathbb{B}$ is determined by the conditions that $D_\Lambda$ is a derivation
and $D_\Lambda(\mathbf{h}_i) = \mathbf{h}_{i-1}$.

\begin{thm}
    \label{thm:derivation}
    Let $\Lambda$ be a fundamental weight.
    $D_\Lambda$ is a derivation on the affine Fomin-Stanley subalgebra
    $\mathbb{B}$. Explicitly, for $x,y \in \mathbb{B}$,
    $$D_\Lambda(\uu_{xy}) = D_\Lambda(\uu_x) \uu_y + \uu_x D_\Lambda(\uu_y).$$
    In particular, $D_\Lambda$ stabilizes
    $\mathbb{B}$; that is, $D_\Lambda(\mathbb{B}) \subset \mathbb{B}.$
\end{thm}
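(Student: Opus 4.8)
The plan is to leverage the two facts already at our disposal: Lam's theorem, which tells us that $\mathbb{B}$ is the commutative polynomial algebra freely generated by $\mathbf{h}_1, \dots, \mathbf{h}_k$, and Theorem \ref{thm:bracket}, which already contains the Leibniz rule in the special case where one of the two factors is a generator. Indeed, combining $D_\Lambda(\mathbf{h}_i) = \mathbf{h}_{i-1}$ with the identity $D_\Lambda(\mathbf{h}_i\uu_w) = \mathbf{h}_{i-1}\uu_w + \mathbf{h}_i D_\Lambda(\uu_w)$ and extending linearly over $w$, we obtain, for every $x \in \mathbb{B}$,
\[
D_\Lambda(\mathbf{h}_i x) = D_\Lambda(\mathbf{h}_i)\, x + \mathbf{h}_i\, D_\Lambda(x),
\]
which is precisely the Leibniz rule for the product $\mathbf{h}_i \cdot x$. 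The entire argument is then a bootstrapping procedure that promotes this ``one factor is a generator'' version to the general statement.

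First I would record two preliminaries. Since the identity $1$ of $\mathbb{A}$ equals $\mathbf{h}_0 = \uu_w$ for $w$ the identity of $W$, which has length zero and covers nothing from below in Bruhat order, we have $D_\Lambda(1) = 0$. Next, I would establish the stabilization $D_\Lambda(\mathbb{B}) \subseteq \mathbb{B}$ by induction on the degree of a monomial $\mathbf{h}_{i_1}\cdots\mathbf{h}_{i_m}$ in the generators: the base case is $D_\Lambda(1) = 0 \in \mathbb{B}$, and the displayed identity rewrites $D_\Lambda(\mathbf{h}_i x)$ in terms of $\mathbf{h}_{i-1}, \mathbf{h}_i \in \mathbb{B}$ and $D_\Lambda(x)$, the latter lying in $\mathbb{B}$ by induction (since $\mathbf{h}_0 = 1$, the degree-lowering stays inside the algebra). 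Because $\mathbb{B}$ is closed under multiplication, this shows $D_\Lambda$ maps $\mathbb{B}$ into itself, so that the statement makes sense.

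With stabilization in place, I would prove the full Leibniz rule $D_\Lambda(xy) = D_\Lambda(x)y + x D_\Lambda(y)$ by first reducing, via bilinearity, to the case where $x$ and $y$ are monomials in the $\mathbf{h}_i$, and then inducting on the degree of $x$. The base case $x = 1$ is the first preliminary above. For the inductive step I write $x = \mathbf{h}_i x'$ with $x'$ of strictly smaller degree; applying the generator-Leibniz identity to the product $\mathbf{h}_i\cdot(x'y)$, then the inductive hypothesis to $D_\Lambda(x'y)$, and finally the generator-Leibniz identity in reverse to recognize $D_\Lambda(\mathbf{h}_i)x' + \mathbf{h}_i D_\Lambda(x')$ as $D_\Lambda(x)$, yields the desired identity. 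Commutativity of $\mathbb{B}$ is exactly what allows me to peel a generator off the left in each step.

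The honest assessment is that the substantive analytic work has already been carried out in Theorem \ref{thm:bracket}, and what remains is formal. The single point demanding care is that Theorem \ref{thm:bracket} delivers the Leibniz rule only when the distinguished factor is literally one of the generators $\mathbf{h}_i$; the induction must therefore exploit both the commutativity and the free generation of $\mathbb{B}$ to reduce an arbitrary product to that case. The same induction must also be run first to confirm that $D_\Lambda$ does not leave $\mathbb{B}$, for otherwise the assertion ``$D_\Lambda$ is a derivation on $\mathbb{B}$'' would not even be well-posed.
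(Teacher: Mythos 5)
Your proposal is correct and follows essentially the same route as the paper: both arguments take Theorem \ref{thm:bracket} as the key lemma and bootstrap its ``one factor is a generator'' Leibniz rule to arbitrary products by exploiting that $\mathbb{B}$ is generated by the commuting elements $\mathbf{h}_1,\dots,\mathbf{h}_k$, the paper by iteratively peeling generators off a basis monomial $\mathbf{h}_{\lambda_1}\cdots\mathbf{h}_{\lambda_m}$ and you by an equivalent induction on degree. Your version is somewhat more careful in making explicit the points the paper leaves implicit ($D_\Lambda(1)=0$, the stabilization $D_\Lambda(\mathbb{B})\subseteq\mathbb{B}$, and the reduction from basis monomials to arbitrary products), but the mathematical content is identical.
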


\begin{proof}
It is enough to prove this for a basis of $\mathbb{B}$; we will show it for the basis $\{ \mathbf{h}_\lambda := \mathbf{h}_{\lambda_1} \cdots \mathbf{h}_{\lambda_m} : \lambda_i \leq k\}$. For this, it is enough to show that $D_\Lambda(\mathbf{h}_\lambda) = \sum_{j=1}^m \mathbf{h}_{\lambda_1} \cdots D_\Lambda(\mathbf{h}_{\lambda_j}) \cdots \mathbf{h}_{\lambda_m}$. However this follows by the linearity of $D_\Lambda$  and  Lemma \ref{thm:bracket}:
\[ D_\Lambda(\mathbf{h}_{\lambda_1}\cdots \mathbf{h}_{\lambda_m}) = \mathbf{h}_{\lambda_1} D_\Lambda(\mathbf{h}_{\lambda_2}\cdots  \mathbf{h}_{\lambda_m}) +  \mathbf{h}_{\lambda_1-1} \mathbf{h}_{\lambda_2}\cdots  \mathbf{h}_{\lambda_m} = \]
\[ \mathbf{h}_{\lambda_1}  \mathbf{h}_{\lambda_2} D_\Lambda( \mathbf{h}_{\lambda_3}\cdots \mathbf{h}_{\lambda_m}) + \mathbf{h}_{\lambda_1} \mathbf{h}_{\lambda_2-1} \cdots \mathbf{h}_{\lambda_m} +  \mathbf{h}_{\lambda_1-1} \mathbf{h}_{\lambda_2}\cdots  \mathbf{h}_{\lambda_m}\] 
\[= \dots = \sum_{j=1}^m \mathbf{h}_{\lambda_1} \cdots D(\mathbf{h}_{\lambda_j}) \cdots \mathbf{h}_{\lambda_m}\]
\end{proof}

Finally, we describe the coefficients of the operator combinatorially. The next
result shows that it suffices to know the value of $D_\Lambda$ on the elements
in $W^j$. In the case that $j=0$, this says that it suffices to know the values
of $D_\Lambda$ on the affine Grassmannian elements.
\begin{thm}
    \label{thm:scalars}
    Suppose $w \in W^j$ and $v \in W_j$. Then 
    \[ D_{\Lambda_j}(\uu_{wv}) = D_{\Lambda_j}(\uu_{w}) \uu_{v}.\]
\end{thm}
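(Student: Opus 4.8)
The plan is to exploit the factorization $wv$ with $w \in W^j$ and $v \in W_j$, together with the behavior of the coroot pairing against the fundamental weight $\Lambda_j$. Recall that $D_{\Lambda_j}(\uu_{wv}) = \sum_{u \lessdot wv} \langle \alpha_{u,wv}^\vee, \Lambda_j \rangle \uu_u$. The key structural fact I would use is that a Bruhat covering $u \lessdot wv$ of a factored element either modifies the ``$W^j$-part'' $w$ while keeping a factorization of the correct parabolic shape, or it modifies the ``$W_j$-part'' $v$ inside the parabolic subgroup $W_j$. I would first set up this dichotomy carefully: since $\ell(wv) = \ell(w) + \ell(v)$ for $w \in W^j$ and $v \in W_j$, each covering $u \lessdot wv$ drops the length by one, and I expect to show that $u$ is again of the form $w'v'$ with $w' \in W^j$, $v' \in W_j$, where either $w' = w$ and $v' \lessdot v$ in $W_j$, or $v' = v$ and $w' \lessdot w$ in $W^j$ with the covering reflection ``coming from the $w$ side.''

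Second, I would compute the coroot pairing in each of the two cases. For coverings that alter the $W_j$-part (so $u = w v'$ with $v' \lessdot v$ in $W_j$), the associated reflection $s_{\alpha_{u,wv}}$ is conjugate by $w$ to $s_{\alpha_{v',v}}$ where $v', v \in W_j$; using the second statement of Lemma \ref{lemma:coroots} one gets $\alpha_{u,wv}^\vee = w \, \alpha_{v',v}^\vee$, and then by Equation (\ref{eqn:w}) together with Lemma \ref{lemma:fixedaction}, $\langle \alpha_{u,wv}^\vee, \Lambda_j \rangle = \langle \alpha_{v',v}^\vee, w^{-1}\Lambda_j\rangle$. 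Since $w \in W^j$ need not fix $\Lambda_j$, this pairing is not obviously zero; the crucial point is that $v', v \in W_j$, so $\alpha_{v',v}^\vee$ is a coroot associated to a reflection lying inside $W_j$, and Lemma \ref{lemma:fixedaction} gives $\langle \alpha_{v',v}^\vee, \Lambda_j\rangle = 0$. Thus I need the pairing against $w^{-1}\Lambda_j$ rather than $\Lambda_j$, and I would argue this also vanishes by observing that $w^{-1}\Lambda_j - \Lambda_j$ lies in a span orthogonal to the coroots of $W_j$, or more directly by applying Lemma \ref{lemma:coroots} in the other order to write the reflection's coroot as $w$ applied to something in $W_j$. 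These coverings contribute zero.

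Third, for coverings that alter the $W^j$-part, I would show $u = w' v$ with $w' \lessdot w$, and that $\alpha_{u, wv}^\vee = \alpha_{w', w}^\vee$ by applying the second statement of Lemma \ref{lemma:coroots} with the common right factor $v$ cancelling: from $u = w'v \lessdot wv$ we get $\alpha_{w'v, wv}^\vee = v^{-1}\alpha_{w',w}^\vee$, and then $\langle v^{-1}\alpha_{w',w}^\vee, \Lambda_j\rangle = \langle \alpha_{w',w}^\vee, v\Lambda_j\rangle = \langle \alpha_{w',w}^\vee, \Lambda_j\rangle$ by Equation (\ref{eqn:w}) and the first statement of Lemma \ref{lemma:fixedaction}, since $v \in W_j$ fixes $\Lambda_j$. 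Summing over exactly these coverings reproduces $\bigl(\sum_{w' \lessdot w}\langle \alpha_{w',w}^\vee,\Lambda_j\rangle \uu_{w'}\bigr)\uu_v = D_{\Lambda_j}(\uu_w)\,\uu_v$, which is the desired identity.

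The main obstacle I anticipate is the combinatorial bookkeeping in the first step, namely rigorously establishing the covering dichotomy: that every $u \lessdot wv$ respects the parabolic factorization and falls cleanly into one of the two families, with no ``mixed'' coverings that simultaneously move mass between the $W^j$ and $W_j$ parts. This requires a careful length and minimal-coset-representative argument — for instance, writing $u = u^j u_j$ in its own parabolic factorization and comparing lengths — and is where I would need to invoke standard properties of minimal coset representatives (the product $\ell(wv) = \ell(w)+\ell(v)$ and the fact that left multiplication within the coset structure is controlled). Once that dichotomy is secured, the coroot computations in the second and third steps are essentially forced by Lemmas \ref{lemma:coroots} and \ref{lemma:fixedaction}, so the weight of the proof really rests on that structural claim about Bruhat coverings of $wv$.
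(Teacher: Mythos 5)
Your overall route is the same as the paper's: use $\ell(wv)=\ell(w)+\ell(v)$ to split the Bruhat coverings of $wv$ into those of the form $w'v$ with $w'\lessdot w$ and those of the form $wv'$ with $v'\lessdot v$, show the second family pairs to zero against $\Lambda_j$, and match the first family with $D_{\Lambda_j}(\uu_w)\uu_v$. Your third step matches the paper's computation, and your careful treatment of the covering dichotomy (which the paper leaves implicit) is sound. The genuine problem is your second step. For $u=wv'\lessdot wv$ the two elements share a common \emph{left} factor $w$, so it is the \emph{first} statement of Lemma~\ref{lemma:coroots} that applies: $s_{\alpha_{wv',wv}}=(wv')^{-1}(wv)=v'^{-1}v$, hence $\alpha^\vee_{wv',wv}=\alpha^\vee_{v',v}$ exactly, with no $w$ appearing anywhere. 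Your claim $\alpha^\vee_{u,wv}=w\,\alpha^\vee_{v',v}$ (obtained by invoking the second statement, which concerns common \emph{right} factors) is false, and the quantity it forces you to control, $\langle \alpha^\vee_{v',v},w^{-1}\Lambda_j\rangle$, genuinely does not vanish: take $k=2$, $j=0$, $w=s_0\in W^0$, $v=s_1\in W_0$, $v'=e$; then $\alpha^\vee_{v',v}=\alpha_1^\vee$ and $w^{-1}\Lambda_0=s_0\Lambda_0=\Lambda_0-\alpha_0$, so $\langle\alpha_1^\vee,w^{-1}\Lambda_0\rangle=-\langle\alpha_1^\vee,\alpha_0\rangle=1$. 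The same example refutes your first proposed repair, since $w^{-1}\Lambda_j-\Lambda_j=-\alpha_0$ is not orthogonal to the coroots of $W_j$.

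The correct repair is essentially your parenthetical second alternative (``apply Lemma~\ref{lemma:coroots} in the other order''), made precise: a common left factor leaves the associated coroot unchanged, so $\langle\alpha^\vee_{wv',wv},\Lambda_j\rangle=\langle\alpha^\vee_{v',v},\Lambda_j\rangle=0$ immediately by the second statement of Lemma~\ref{lemma:fixedaction}, since $v',v\in W_j$. Note the coroot is not ``$w$ applied to something in $W_j$''; it simply \emph{is} a coroot attached to a reflection of $W_j$. With that one-line correction your argument coincides with the paper's proof: the coverings $wv'$ contribute nothing, and the coverings $w'v$ contribute $\langle\alpha^\vee_{w',w},\Lambda_j\rangle\uu_{w'v}$, which sums to $D_{\Lambda_j}(\uu_w)\uu_v$ (those $w'\lessdot w$ with $\ell(w'v)<\ell(w')+\ell(v)$ cause no discrepancy, since for them $\uu_{w'}\uu_v=0$ in the nilCoxeter algebra and $w'v$ is not a covering of $wv$).
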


\begin{proof}
We compute 
\begin{align*}
 D_{\Lambda_j}(\uu_{xy}) & = \sum_{v \lessdot x } \langle \alpha_{vy, xy} \rangle \uu_{vy}+ \sum_{w \lessdot y} \langle \alpha_{xw, xy}, \Lambda_j \rangle \uu_{xw}  \\ 
& = \sum_{v \lessdot x } \langle y^{-1}(\alpha_{v, x}), \Lambda_j \rangle \uu_{vy}+ \sum_{w \lessdot y} \langle \alpha_{w, y}, \Lambda_j \rangle \uu_{xw} & \textrm{ by Lemma \ref{lemma:coroots}}\\
& = \sum_{v \lessdot x} \langle \alpha_{v,x}^\vee, y\Lambda_j \rangle \uu_{vy} + \sum_{w \lessdot y} \langle \alpha_{w,y}^\vee, \Lambda_j \rangle \uu_{xw} & \textrm{ by Equation (\ref{eqn:w}})\\
& =\sum_{v \lessdot x} \langle \alpha_{v,x}^\vee, \Lambda_j \rangle \uu_{vy}& \textrm{ by Lemma \ref{lemma:fixedaction} }\\
& = D_{\Lambda_j}(\uu_x) \uu_y & \qedhere
\end{align*} 
\end{proof}

We now give a combinatorial formula to apply the down operator to the elements
of $W^j$. This generalizes the description of the coefficients
given in \cite{LLMS}.

\begin{thm}
    \label{thm:D on grassmannians}
    Suppose $w \in W^j$. Then 
    \[D_{\Lambda_j}(\uu_w) = \sum_{z \lessdot w} c_z^{w,j} \uu_z,\]
    where $c_z^{w, j}$ is the number of addable $(i_\ell-j)$-cells of the
    $(k+1)$-core $s_{i_{\ell-1} -j} \cdots s_{i_1 -j} \emptyset$,
    where $s_{i_m} \cdots s_{i_2} s_{i_1}$ is a reduced expression
    for $w$ and $s_{i_m} \cdots \widehat{s_{i_\ell}} \cdots s_{i_1}$ is a
    reduced expression for $z$.
\end{thm}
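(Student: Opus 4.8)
The plan is to reduce the coefficient $c_z^{w,j} = \langle \alpha_{z,w}^\vee, \Lambda_j\rangle$ to a pairing of a fixed coroot against a rotated fundamental weight, and then translate that pairing into core combinatorics. First I would pin down the root $\alpha_{z,w}$. Writing $w = p\,s_{i_\ell}\,u$ with $p = s_{i_m}\cdots s_{i_{\ell+1}}$ and $u = s_{i_{\ell-1}}\cdots s_{i_1}$, so that $z = p\,u$, a direct cancellation gives $z^{-1}w = u^{-1}s_{i_\ell}u = s_{u^{-1}(\alpha_{i_\ell})}$; since $z \lessdot w$, the root $\alpha_{z,w}$ is positive, forcing $\alpha_{z,w} = u^{-1}(\alpha_{i_\ell})$ and hence $\alpha_{z,w}^\vee = u^{-1}(\alpha_{i_\ell}^\vee)$ (the same move used in Lemma \ref{lemma:coroots}). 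Applying the $W$-invariance of the pairing, Equation (\ref{eqn:w}), then yields
\[ c_z^{w,j} = \langle u^{-1}(\alpha_{i_\ell}^\vee), \Lambda_j\rangle = \langle \alpha_{i_\ell}^\vee, u\,\Lambda_j\rangle. \]

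Next I would remove the dependence on $j$ using the rotation automorphism $\sigma$ of the affine Dynkin diagram sending node $i$ to $i+1$ modulo $k+1$. As an automorphism of the root datum, $\sigma$ preserves the pairing and satisfies $\sigma(\alpha_i^\vee) = \alpha_{i+1}^\vee$, $\sigma(\Lambda_i) = \Lambda_{i+1}$, and $\sigma(s_i) = s_{i+1}$. Applying $\sigma^{-j}$ to both arguments gives $c_z^{w,j} = \langle \alpha_{i_\ell - j}^\vee, u'\Lambda_0\rangle$, where $u' = \sigma^{-j}(u) = s_{i_{\ell-1}-j}\cdots s_{i_1 - j}$ and all indices are read modulo $k+1$. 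This reduces everything to the $j=0$ situation, in which the relevant core is exactly $u'\emptyset = s_{i_{\ell-1}-j}\cdots s_{i_1-j}\emptyset$ from the statement.

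The core combinatorial input is the dictionary, essentially the content of \cite{LLMS}, that for every $v \in W$ and every node $a$ one has $\langle \alpha_a^\vee, v\Lambda_0\rangle = A_a(v\emptyset) - R_a(v\emptyset)$, where $A_a$ and $R_a$ count the addable and removable residue-$a$ cells of the $(k+1)$-core $v\emptyset$. Both sides depend only on the coset $vW_0$ (the left side because $W_0$ fixes $\Lambda_0$ by Lemma \ref{lemma:fixedaction}, the right side because $v\emptyset$ depends only on $vW_0$), so one may replace $u'$ by its minimal-length representative in $W^0$ and invoke the Grassmannian statement directly. Applying this with $a = i_\ell - j$ and $v = u'$ gives $c_z^{w,j} = A_{i_\ell-j}(u'\emptyset) - R_{i_\ell-j}(u'\emptyset)$.

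It remains to see that the removable term vanishes, so that the coefficient equals the number of addable $(i_\ell - j)$-cells as claimed. Here I would use positivity: since $z \lessdot w$, the coroot $\alpha_{z,w}^\vee$ is a positive coroot and $\Lambda_j$ is dominant, so $c_z^{w,j} = \langle \alpha_{z,w}^\vee, \Lambda_j\rangle \ge 0$. A $(k+1)$-core never has both an addable and a removable cell of the same residue, so if $R_{i_\ell-j}(u'\emptyset) > 0$ then $A_{i_\ell-j}(u'\emptyset) = 0$, and the dictionary would force $c_z^{w,j} = -R_{i_\ell-j}(u'\emptyset) < 0$, a contradiction. Hence $R_{i_\ell-j}(u'\emptyset) = 0$ and $c_z^{w,j} = A_{i_\ell-j}(u'\emptyset)$. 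The step I expect to be the main obstacle is precisely this last point — cleanly passing from ``addable minus removable'' to ``addable'' — together with stating the addable/removable dictionary for an arbitrary (not necessarily Grassmannian) prefix $u'$; the positivity argument above is what makes that passage rigorous.
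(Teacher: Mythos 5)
Your proposal is correct, and its opening move coincides with the paper's: factor $w = p\,s_{i_\ell}\,u$ and cancel to get $z^{-1}w = u^{-1}s_{i_\ell}u$, identifying $\alpha_{z,w}^\vee = u^{-1}(\alpha_{i_\ell}^\vee)$ (the paper phrases this as $\alpha_{v,w}^\vee = \alpha_{x,y}^\vee$ for the prefixes $x = s_{i_{\ell-1}}\cdots s_{i_1}$ and $y = s_{i_\ell}\cdots s_{i_1}$, via Lemma \ref{lemma:coroots}). From there the two arguments genuinely diverge. The paper stays on the cover side: it cites \cite{LLMS} and \cite{LS} for the fact that $\langle \alpha_{x,y}^\vee, \Lambda\rangle$ counts the ribbons attached to the strong cover $x \lessdot y$, and then observes that since $x \weakcover y$ is a weak cover those ribbons are exactly the cells added in passing from $s_{i_{\ell-1}-j}\cdots s_{i_1-j}\emptyset$ to $s_{i_\ell-j}\cdots s_{i_1-j}\emptyset$; the residue shift by $j$ is left implicit there. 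You instead push the group element onto the weight via Equation (\ref{eqn:w}), obtaining $\langle \alpha_{i_\ell}^\vee, u\Lambda_j\rangle$, make the $j$-shift explicit with the cyclic diagram automorphism, and finish with the core-weight dictionary $\langle\alpha_a^\vee, v\Lambda_0\rangle = A_a(v\emptyset) - R_a(v\emptyset)$ (Misra--Miwa \cite{MM}, also extractable from \cite{LLMS}) together with a positivity argument ($\alpha_{z,w}^\vee$ positive, $\Lambda_j$ dominant, and a core never having both an addable and a removable cell of the same residue) to kill the removable term. What each buys: the paper's route is shorter but leans on the heavier strong-strip/ribbon machinery and glosses over the rotation; yours needs only the more elementary weight dictionary, justifies the shift by $j$ explicitly, and replaces the automatic nonnegativity of a ribbon count by the positivity trick. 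Two small points for a final writeup: in a strict realization of the root datum the rotation satisfies $\sigma(\Lambda_i) = \Lambda_{i+1}$ only up to a multiple of $\delta$, which is harmless since $\delta$ pairs to zero with every coroot; and your reduction of the dictionary to the Grassmannian representative of $u'W_0$ is exactly what Lemma \ref{lemma:fixedaction} provides, so that step is sound as stated.
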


\begin{proof}
Suppose $v \lessdot w$ with $w \in W^j$. Let $x = s_{i_{\ell-1}} \cdots s_{i_1}$ and $y = s_{i_\ell} \cdots s_{i_1}$. Then $\alpha_{v,w}^\vee = \alpha_{x,y}^\vee$ since $v^{-1} w = x^{-1} y$. By \cite{LLMS} and \cite{LS}, $\langle \alpha_{x,y}^\vee, \Lambda \rangle$ is the number of ribbons associated to the cover $x \lessdot y$. However, $x \weakcover y$, so this is equivalent to the number of cells added between the corresponding shapes $ s_{i_{\ell-1} -j}\cdots s_{i_1 - j}\emptyset$ and $ s_{i_\ell - j}\cdots s_{i_1 - j}\emptyset$.
\end{proof}

These previous two theorems combine to give a combinatorial method for calculating the
down operator on any basis element $\uu_w$. We illustrate this in the following
example.

\begin{eg}\label{eg:Dop}
Fix $k= 3$.
We calculate $D_{\Lambda_0}(\uu_2 \uu_3 \uu_0 \uu_1 \uu_2 \uu_3 \uu_0 \uu_2)$.
By Theorem \ref{thm:scalars},
$$
D_{\Lambda_0}(\uu_2\uu_3\uu_0\uu_1\uu_2\uu_3\uu_0\uu_2)
=
D_{\Lambda_0}(\uu_2\uu_3\uu_0\uu_1\uu_2\uu_3\uu_0)\uu_2
$$
since $s_2 s_3 s_0 s_1 s_2 s_3 s_0 \in W^0$.
Hence, it suffices to calculate
$D_{\Lambda_0}(\uu_2\uu_3\uu_0\uu_1\uu_2\uu_3\uu_0)$.

By Theorem \ref{thm:D on grassmannians},
the coefficient of $\uu_2\uu_3\uu_1\uu_2\uu_3\uu_0$
in
$D_{\Lambda_0}(\uu_2\uu_3\uu_0\uu_1\uu_2\uu_3\uu_0)$
is the number of addable $0$-cells in the $4$-core $s_1 s_2 s_3 s_0 \cdot \emptyset =
(2,1,1,1)$, which is $2$ (as indicated by the shaded cells in Figure \ref{fig:addable}).

\begin{figure}[ht]
\begin{center}
    $\tikztableausmall{{0,1},{3,\boldentry X},{2},{1},{\boldentry X},}$
\end{center}
\caption{The addable $0$-cells in the $4$-core $(2,1,1,1)$.}
\label{fig:addable}
\end{figure}
Similarly, one computes all the other coefficients:
\begin{align*}
D_{\Lambda_0}(\uu_2\uu_3\uu_0\uu_1\uu_2\uu_3\uu_0) 
&= 3\,\uu_3\uu_0\uu_1\uu_2\uu_3\uu_0 + 2\,\uu_2\uu_0\uu_1\uu_2\uu_3\uu_0 + 2\,\uu_2\uu_3\uu_1\uu_2\uu_3\uu_0 \\
&\phantom{=} + \uu_2\uu_3\uu_0\uu_1\uu_3\uu_0 + \uu_2\uu_3\uu_0\uu_1\uu_2\uu_0 + \uu_2\uu_3\uu_0\uu_1\uu_2\uu_3.
\end{align*}
\end{eg}

\section{Expansions of non-commutative $k$-Schur functions and $k$-Littlewood--Richardson
coefficients for ``near'' rectangles}

This section describes the connection between expansions of non-commutative $k$-Schur functions
in the standard basis of $\mathbb{A}$ and the $k$-Littlewood--Richardson rule.
We then recall the expansions of the non-commutative $k$-Schur functions for $k$-rectangles,
from which we deduce expansions of the non-commutative $k$-Schur functions for the ``near''
rectangles.

\subsection{Expansion of $\mathfrak{s}_\lambda^{(k)}$ and the $k$-Littlewood--Richardson coefficients}
\label{ss:expansions}

Under the identification of $\mathbb{B}$ and $\Lambda_{(k)}$, we let $\kschur_\lambda$ correspond to $s_\lambda^{(k)}$.
An important problem in the theory of $k$-Schur functions is to understand the multiplicative structure coefficients $c_{\lambda, \mu}^{\nu, (k)}$, called the \textit{$k$-Littlewood--Richardson coefficients}:

\[ s_\lambda^{(k)} s_\mu^{(k)} = \sum_\nu c_{\lambda, \mu}^{\nu, (k)} s_\nu^{(k)}.\]

Another difficult problem is determining an expansion  for $\kschur_\lambda$ in terms of the natural basis $\{\uu_w\}_{w \in W}$ of $\mathbb{A}$. In other words, to find the coefficients $d_\lambda^w$ in the expansion:
\[ \kschur_\lambda = \sum_{w\in W} d_\lambda^w \uu_w.\]

In \cite{Lam0}, Lam proved that these two problems are actually equivalent.
Explicitly, he observed the following.
\begin{thm}[{\cite[Proposition 42]{Lam0}}]\label{obs:equiv}
The coefficient $c_{\lambda,\mu}^{\nu, (k)}$ is nonzero only if 
$w_\mu$ is less than $w_\nu$ in left weak order,
and in this case $c_{\lambda,\mu}^{\nu, (k)} = d^{w_\nu w_\mu^{-1}}_\lambda$.
\end{thm}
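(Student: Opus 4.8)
The plan is to transport the multiplicative statement into the nilCoxeter algebra $\mathbb{A}$ and then extract a single coefficient, using the duality between the non-commutative $k$-Schur functions and the Grassmannian basis elements $\uu_w$, $w\in W^0$. Since $j_0$ is a ring isomorphism onto $\mathbb{B}$ carrying $s_\lambda^{(k)}$ to $\kschur_\lambda$, the defining relation $s_\lambda^{(k)} s_\mu^{(k)} = \sum_\nu c_{\lambda,\mu}^{\nu,(k)} s_\nu^{(k)}$ becomes $\kschur_\lambda \kschur_\mu = \sum_\nu c_{\lambda,\mu}^{\nu,(k)}\kschur_\nu$ as an identity in $\mathbb{A}$. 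Conditions \eqref{eqn:coeff1}--\eqref{eqn:coeff0} say that for a Grassmannian element $w_\sigma\in W^0$ one has $d_\sigma^{w_\rho}=\delta_{\sigma\rho}$; hence extracting the coefficient of $\uu_{w_\nu}$ (with $w_\nu\in W^0$) from the right-hand side returns exactly $c_{\lambda,\mu}^{\nu,(k)}$. So the whole problem reduces to computing the coefficient of $\uu_{w_\nu}$ in the product $\kschur_\lambda\kschur_\mu$.

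Writing $\kschur_\lambda=\sum_x d_\lambda^x \uu_x$ and $\kschur_\mu=\sum_y d_\mu^y \uu_y$ and recalling that $\uu_x\uu_y=\uu_{xy}$ when $\ell(xy)=\ell(x)+\ell(y)$ and $\uu_x\uu_y=0$ otherwise, the coefficient of $\uu_{w_\nu}$ equals $\sum d_\lambda^x d_\mu^y$, the sum taken over all \emph{reduced} factorizations $w_\nu=xy$ (that is, $\ell(x)+\ell(y)=\ell(w_\nu)$). I would then isolate the factorizations that can actually contribute.

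The crux is a lemma on factoring Grassmannian elements: in any reduced factorization $w_\nu=xy$ with $w_\nu\in W^0$, the right factor $y$ again lies in $W^0$. Indeed, recall $W^0$ consists of the minimal-length representatives of $W/W_0$, equivalently the elements with no right descent $s_i$ for $i\in\{1,\dots,k\}$. If such an $s_i$ were a right descent of $y$, then $\ell(w_\nu s_i)\le \ell(x)+\ell(y s_i)=\ell(w_\nu)-1$, making $s_i$ a right descent of $w_\nu$ and contradicting $w_\nu\in W^0$. Granting this, every factorization has $y\in W^0$, and since $d_\mu^{y}=0$ for all Grassmannian $y\neq w_\mu$ while $d_\mu^{w_\mu}=1$, all terms with $y\neq w_\mu$ vanish. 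I expect this lemma to be the only genuine obstacle; the rest is formal bookkeeping.

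Finally, the surviving sum collapses to the unique reduced factorization $w_\nu = x\,w_\mu$ with $\ell(x)+\ell(w_\mu)=\ell(w_\nu)$, \emph{when it exists}. Such an $x$ is forced to equal $w_\nu w_\mu^{-1}$, and the existence of a length-additive factorization $w_\nu = (w_\nu w_\mu^{-1})\,w_\mu$ is precisely the statement that $w_\mu \preceq w_\nu$ in the left weak order, since the covering relation $\weakcover$ describes exactly left multiplication by a generator increasing length by one. Therefore, if $w_\mu \preceq w_\nu$ in left weak order we obtain $c_{\lambda,\mu}^{\nu,(k)}=d_\lambda^{w_\nu w_\mu^{-1}}$, and otherwise no factorization contributes and $c_{\lambda,\mu}^{\nu,(k)}=0$, which is the claimed statement.
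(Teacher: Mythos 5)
There is nothing in the paper to compare your argument against: Theorem \ref{obs:equiv} is not proved in this paper at all, but is imported from Lam \cite[Proposition 42]{Lam0}, with the citation standing in for the proof. Your argument is correct and self-contained, so it in fact supplies what the paper outsources. The skeleton is the natural one: the identification of $\mathbb{B}$ with $\Lambda_{(k)}$ transports the defining relation for $c_{\lambda,\mu}^{\nu,(k)}$ into an identity in $\mathbb{A}$; conditions \eqref{eqn:coeff1} and \eqref{eqn:coeff0} guarantee that the coefficient of $\uu_{w_\nu}$ on the right-hand side is exactly $c_{\lambda,\mu}^{\nu,(k)}$; and the nilCoxeter multiplication rule ($\uu_x\uu_y=\uu_{xy}$ when lengths add, $0$ otherwise) turns the corresponding coefficient of the product $\kschur_\lambda\kschur_\mu$ into a sum over length-additive factorizations $w_\nu=xy$. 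Your key lemma --- that in such a factorization of $w_\nu\in W^0$ the right factor $y$ again lies in $W^0$, proved by the right-descent argument --- is precisely the right pivot: combined with \eqref{eqn:coeff0} it kills every term except $y=w_\mu$, forcing $x=w_\nu w_\mu^{-1}$. Your final step, identifying ``the factorization $w_\nu=(w_\nu w_\mu^{-1})\,w_\mu$ is length-additive'' with ``$w_\mu$ lies below $w_\nu$ in left weak order,'' is the standard characterization of left weak order; the one sentence I would add is the justification that each letter of a reduced word for $w_\nu w_\mu^{-1}$, applied on the left of $w_\mu$, must raise length by exactly one because the total length increase equals the number of letters (cf.\ \cite{BB}). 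That is a standard Coxeter-group fact, not a gap, so I see no error in the proposal.
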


The main application in this paper of the down operator is to give the
coefficients $d_\lambda^w$ via explicit combinatorics when $\lambda$ is a
``near'' rectangle. From this viewpoint our result gives a combinatorial
description of the corresponding $k$-Littlewood--Richardson coefficients. A
previous result of \cite{BBTZ} will be reviewed in the next section. It
contains the combinatorics of the coefficients that appear in the expansion of
a non-commutative $k$-Schur function corresponding to a rectangle and is needed to prove our
main result.

\subsection{Expansions of rectangular non-commutative $k$-Schur functions}
In \cite{BBTZ}, Berg, Bergeron, Thomas and Zabrocki gave a combinatorial formula for the expansion of the non-commutative $k$-Schur function $\kschur_R$ indexed by a $k$-rectangle $R$. We recall their result here; it will be a stepping stone for our main result.

Let $\nu$ and $\mu$ be $k$-bounded partitions. For the skew shape $\nu/\mu$,
let $\READ{\nu/\mu} \in W$ be the word formed by the residues of the cells in
$\nu/\mu$, reading each row from right to left and taking the rows from bottom
to top. See Example \ref{eg:33}.

\begin{thm}[Berg, Bergeron, Thomas, Zabrocki \cite{BBTZ}]
Suppose $R = (c^r)$ with $c+r = k+1$.
The non-commutative $k$-Schur function $\kschur_R$ has the expansion: \[\kschur_R = \sum_{\lambda \subset R} \uu_{\READ{(R\cup\lambda)/\lambda}},\]
where $\uu_{\READ{(R\cup\lambda)/\lambda}}$ is the monomial in the generators
$\uu_i$ corresponding to ${\READ{(R\cup\lambda)/\lambda}}$.
\end{thm}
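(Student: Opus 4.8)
The plan is to prove the formula by exploiting the characterizing conditions (\ref{eqn:coeff1})--(\ref{eqn:coeff0}): the element $\kschur_R$ is the unique member of $\mathbb{B}$ whose expansion has coefficient $1$ on $\uu_{w_R}$ and coefficient $0$ on every other $\uu_w$ with $w\in W^0$. I would therefore set
\[ X \;:=\; \sum_{\lambda\subset R}\uu_{\READ{(R\cup\lambda)/\lambda}} \]
and reduce the theorem to two claims: (i) $X\in\mathbb{B}$, and (ii) among the basis elements $\uu_w$ with $w\in W^0$, only $\uu_{w_R}$ occurs in $X$, and with coefficient exactly $1$. Observe first that, since $R\cup\lambda$ is the partition whose parts are the multiset union of the parts of $R$ and of $\lambda$, each skew shape $(R\cup\lambda)/\lambda$ has exactly $|R|=cr$ cells; hence $X$ is homogeneous of degree $cr=\ell(w_R)$, as required of $\kschur_R$.

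Claim (ii) I would treat by direct combinatorics on residues. One first checks that each reading word $\READ{(R\cup\lambda)/\lambda}$ is reduced, using that within a row of a rectangular skew shape the residues increase by $1$ from left to right. The term $\lambda=\emptyset$ gives $\READ{R}$, and applying this word to the empty core adds the cells of $R$ one at a time (recall $\mathfrak{c}(R)=R$, since a $k$-rectangle is already a $(k+1)$-core), so it is a reduced word for $w_R$ and contributes $1$ to the coefficient of $\uu_{w_R}$. For $\lambda\neq\emptyset$ I would show $\READ{(R\cup\lambda)/\lambda}\notin W^0$ by producing a right descent $s_i$ with $i\neq 0$, so that no further contribution to any Grassmannian coefficient occurs; since every monomial enters with coefficient $+1$, this pins down the coefficient of $\uu_{w_R}$ as $1$ and all other $W^0$-coefficients as $0$.

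The crux, and the step where the hypothesis $c+r=k+1$ is indispensable, is claim (i). One cannot use conditions (\ref{eqn:coeff1})--(\ref{eqn:coeff0}) here, as that would be circular; instead I would invoke an intrinsic characterization of $\mathbb{B}$ inside $\mathbb{A}$. A first, easy, necessary check is invariance of $X$ under the cyclic shift $\uu_i\mapsto\uu_{i+1}$, which is an algebra automorphism of $\mathbb{A}$ fixing each $\mathbf{h}_i$ and hence fixing $\mathbb{B}$; this invariance follows from the residue symmetry of rectangular skew shapes. The genuinely hard part is the remaining closure condition cutting out $\mathbb{B}$ among such symmetric elements, and this is exactly where the translation structure of $w_R$ for a $k$-rectangle enters: the diagrams $(R\cup\lambda)/\lambda$ are precisely the translates of $R$ past sub-diagrams $\lambda$. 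I expect the cleanest route is to prove, or assume as known, the rectangular factorization $\kschur_R\,\kschur_\mu=\kschur_{R\cup\mu}$ and bootstrap from it, or else to identify $X$ outright with a product in the generators $\mathbf{h}_i$; either way this membership is the main obstacle, whereas claim (ii) is essentially bookkeeping.

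Before committing to the general argument I would test it on a small case, e.g. $k=3$ and $R=(2,2)$: the term $\lambda=\emptyset$ gives $\READ{R}=s_0s_3s_1s_0$, which builds the core $(2,2)$ from $\emptyset$ and so equals $w_R$, while $\lambda=(1)$ gives $\READ{(R\cup\lambda)/\lambda}=s_2s_0s_3s_1$, which has the right descent $s_1$ and hence lies outside $W^0$. This both confirms the bookkeeping of claim (ii) and gives a concrete handle on the membership $X\in\mathbb{B}$ that I would need to establish in general.
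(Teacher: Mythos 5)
This statement is never proved in the paper itself: it is quoted verbatim from \cite{BBTZ} (``We recall their result here; it will be a stepping stone for our main result''), so your proposal must be judged as a free-standing proof, and as such it has a genuine gap. Your reduction to (i) membership $X\in\mathbb{B}$ and (ii) the Grassmannian coefficient conditions (\ref{eqn:coeff1})--(\ref{eqn:coeff0}) is the right frame, and (ii) is indeed mostly bookkeeping (modulo reducedness of the reading words, which you also only assert). But (i), which you yourself call ``the crux'' and ``the main obstacle,'' is never established: you offer only the alternatives of proving or assuming the rectangular factorization, or identifying $X$ with a product in the generators $\mathbf{h}_i$. Claim (i) is essentially the entire content of the theorem --- it is exactly what the cited paper \cite{BBTZ} exists to prove --- so an outline that defers it is not a proof.

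Moreover, neither fallback can be repaired. Identifying $X$ with a product of the $\mathbf{h}_i$ fails already for $k=2$, $R=(1,1)$: there the three terms are $\uu_2\uu_0$, $\uu_1\uu_2$, $\uu_0\uu_1$, and a direct check gives $X=\mathbf{h}_1^2-\mathbf{h}_2$, which is not a monomial in the generators since the $\mathbf{h}_i$ freely generate $\mathbb{B}$. And the factorization $\kschur_R\kschur_\mu=\kschur_{R\cup\mu}$ is a statement about $\kschur_R$; to transfer any consequence of it to $X$ you must already know $X=\kschur_R$, or at least $X\in\mathbb{B}$, which is circular. A genuine proof of (i) has to verify an intrinsic characterization of $\mathbb{B}$ (such as Lam's commutation condition in the affine nilHecke algebra) directly for the explicit sum $X$, exploiting the translation structure of $w_R$; that verification constitutes the bulk of \cite{BBTZ}, and no route to it appears in your outline. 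A smaller quibble with (ii): the right descent $s_i$ with $i\neq 0$ cannot always be read off as the last letter of the reading word; when $\lambda$ has rows of full length $c$ that letter can have residue $0$ (for $k=3$, $R=(2,2)$, $\lambda=(2,1)$ the word is $s_1s_3s_2s_0$), and the nonzero descent ($s_2$ in this case) only emerges after commuting letters past one another, so this step too needs a more careful argument than the one you sketch.
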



\begin{eg}\label{eg:33}
Let $R = (3,3)$ and $k=4$. Then $\mathfrak{s}_R^{(4)}$ is the sum of all the monomials in
$\uu_i$ corresponding to the reading words of the skew-partitions $(R\cup
\lambda)/\lambda$, where $\lambda$ is a partition contained inside the
rectangle $R$, as shown:

\begin{gather*}
\begin{array}{ccccc}
  \tikztableausmall{{0,1,2},{4,0,1},}
& \tikztableausmall{{X,1,2},{4,0,1},{\boldentry 3},} 
& \tikztableausmall{{X,X,2},{4,0,1},{\boldentry 3,\boldentry 4},}
& \tikztableausmall{{X,1,2},{X,0,1},{\boldentry 3},{\boldentry 2},}
& \tikztableausmall{{X,X,2},{X,0,1},{\boldentry 3,\boldentry 4},{\boldentry 2},}
\\
  \uu_1\uu_0\uu_4\uu_2\uu_1\uu_0
& \uu_3\uu_1\uu_0\uu_4\uu_2\uu_1
& \uu_4\uu_3\uu_1\uu_0\uu_4\uu_2
& \uu_2\uu_3\uu_1\uu_0\uu_2\uu_1
& \uu_2\uu_4\uu_3\uu_1\uu_0\uu_2
\\[1em]
  \tikztableausmall{{X,X,X},{4,0,1},{\boldentry 3,\boldentry 4,\boldentry 0},}
& \tikztableausmall{{X,X,2},{X,X,1},{\boldentry 3,\boldentry 4},{\boldentry 2,\boldentry 3},}
& \tikztableausmall{{X,X,X},{X,0,1},{\boldentry 3,\boldentry 4,\boldentry 0},{\boldentry 2},}
& \tikztableausmall{{X,X,X},{X,X,1},{\boldentry 3,\boldentry 4,\boldentry 0},{\boldentry 2,\boldentry 3},}
& \tikztableausmall{{X,X,X},{X,X,X},{\boldentry 3,\boldentry 4,\boldentry 0},{\boldentry 2,\boldentry 3,\boldentry 4},}
\\
  \uu_0\uu_4\uu_3\uu_1\uu_0\uu_4
& \uu_3\uu_2\uu_4\uu_3\uu_1\uu_2
& \uu_2\uu_0\uu_4\uu_3\uu_1\uu_0
& \uu_3\uu_2\uu_0\uu_4\uu_3\uu_1
& \uu_4\uu_3\uu_2\uu_0\uu_4\uu_3
\end{array}
\end{gather*}
\end{eg}

\subsection{Non-commutative $k$-Schur functions for ``near'' rectangles}

\begin{prop}\label{prop:rectangle}
Suppose $R = (c^r)$ with $c+r = k+1$ and let $S = (c^{r-1},c-1)$ be the partition obtained from $R$ by removing its bottom-right corner. Let $\Lambda$ be a fundamental weight. Then $D_{\Lambda}(\kschur_R) = \kschur_{S}$.
\end{prop}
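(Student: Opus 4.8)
The plan is to check that $D_\Lambda(\kschur_R)$ satisfies the conditions \eqref{eqn:coeff1} and \eqref{eqn:coeff0} that characterize $\kschur_S$. By Theorem~\ref{thm:derivation}, $D_\Lambda$ stabilizes $\mathbb{B}$, and since every $\uu_w$ occurring in $\kschur_R$ has length $cr$ while $D_\Lambda$ lowers length by one, the element $D_\Lambda(\kschur_R)$ is a homogeneous element of $\mathbb{B}$ of degree $cr-1$. Writing it in the $k$-Schur basis as $D_\Lambda(\kschur_R)=\sum_{|\mu|=cr-1} a_\mu\,\kschur_\mu$, I would first observe that the scalars $a_\mu$ are detected by the affine Grassmannian part of the standard-basis expansion: since $w_\mu\in W^0$ and, by \eqref{eqn:coeff1} and \eqref{eqn:coeff0}, the coefficient of $\uu_{w_\mu}$ in $\kschur_\nu$ is $\delta_{\mu\nu}$, the coefficient of $\uu_{w_\mu}$ in $D_\Lambda(\kschur_R)$ is exactly $a_\mu$. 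Thus the Proposition reduces to the single claim that, among all $\mu$ with $|\mu|=cr-1$, the Grassmannian basis element $\uu_{w_S}$ occurs in $D_\Lambda(\kschur_R)$ with coefficient $1$ and every other $\uu_{w_\mu}$ occurs with coefficient $0$.

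To evaluate these Grassmannian coefficients I would expand $\kschur_R$ by the Berg--Bergeron--Thomas--Zabrocki formula $\kschur_R=\sum_{\lambda\subset R}\uu_{\READ{(R\cup\lambda)/\lambda}}$, apply $D_\Lambda$ termwise, and extract the coefficient of $\uu_{w_\mu}$ as $\sum_{\lambda}\langle\alpha^\vee_{w_\mu,\,\READ{(R\cup\lambda)/\lambda}},\Lambda\rangle$, summed over those $\lambda\subset R$ for which $w_\mu\lessdot\READ{(R\cup\lambda)/\lambda}$. The term $\lambda=\emptyset$ gives $\READ{R}=w_R\in W^0$, which is governed directly by Theorem~\ref{thm:D on grassmannians}: its Grassmannian covers record single-cell removals from the $(k+1)$-core $\mathfrak{c}(R)$, and because $R$ is a $k$-rectangle there is exactly one way to remove a cell and remain a $(k+1)$-core, producing $\mathfrak{c}(S)$ with multiplicity $1$. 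A cleaner conceptual reading of the same computation is available: by Theorem~\ref{thm:bracket}, $D_\Lambda$ restricts on $\mathbb{B}\cong\Lambda_{(k)}$ to the derivation sending $\mathbf{h}_i\mapsto\mathbf{h}_{i-1}$, which is precisely the skewing operator $\mathbf{h}_1^\perp$ adjoint to multiplication by $\mathbf{h}_1$; pairing against the dual basis $\{\mathfrak{S}^{(k)}_\mu\}$ then gives $a_\mu=\langle\kschur_R,\mathbf{h}_1\,\mathfrak{S}^{(k)}_\mu\rangle$, so that $a_\mu$ counts the appearances of $R$ in the weak-order Pieri expansion of $\mathbf{h}_1\,\mathfrak{S}^{(k)}_\mu$. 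Either way the predicted answer is $a_\mu=\delta_{\mu S}$.

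The main obstacle is the claim isolated above: that the net Grassmannian coefficient of $D_\Lambda(\kschur_R)$ vanishes except at $w_S$, where it equals $1$. This is exactly the point at which the hypothesis $c+r=k+1$ must enter, since it is the defining feature of a $k$-rectangle that its $(k+1)$-core admits a unique removable corner (equivalently, a unique weak-order predecessor of size $cr-1$), which forces $S$ to be the only surviving $\mu$. Controlling the contributions of the terms $\lambda\neq\emptyset$ in the Berg--Bergeron--Thomas--Zabrocki expansion -- showing that, after summing, they create no additional Grassmannian term and leave the coefficient at $w_S$ equal to $1$ -- is the combinatorial heart of the argument; the conceptual route through $\mathbf{h}_1^\perp$ and the dual Pieri rule sidesteps this bookkeeping at the cost of invoking the adjoint Pieri rule for dual $k$-Schur functions.
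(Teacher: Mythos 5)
Your opening paragraph is exactly the paper's first move: by Theorem \ref{thm:derivation}, $D_\Lambda(\kschur_R)$ lies in $\mathbb{B}$, so by the characterization (\ref{eqn:coeff1})--(\ref{eqn:coeff0}) it equals $\sum_\mu a_\mu \kschur_\mu$, where $a_\mu$ is the coefficient of the Grassmannian element $\uu_{w_\mu}$, and the proposition reduces to showing $a_\mu = \delta_{\mu S}$. The problem is everything after that. You propose to compute these coefficients by expanding $\kschur_R$ via the Berg--Bergeron--Thomas--Zabrocki formula and applying $D_\Lambda$ termwise, but you only handle the term $\lambda = \emptyset$, and you explicitly defer ``controlling the contributions of the terms $\lambda \neq \emptyset$'' as ``the combinatorial heart of the argument.'' That deferred step \emph{is} the proposition; nothing in your text proves it, and your fallback route through $\mathbf{h}_1^\perp$ only trades it for an adjoint Pieri rule for dual $k$-Schur functions that is neither proved nor cited in the paper, so it too is a sketch rather than a proof. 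There is also a smaller slip: Theorem \ref{thm:D on grassmannians} pairs the coset space $W^j$ with the specific weight $\Lambda_j$, so it does not ``directly govern'' the covers of $w_R \in W^0$ for an arbitrary fundamental weight $\Lambda$; you never make the reduction to $\Lambda = \Lambda_0$ that would license invoking it.

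The missing idea is Theorem \ref{thm:scalars}, which eliminates the bookkeeping entirely; in fact the paper's proof never uses the BBTZ expansion at all. First, since by Theorems \ref{thm:bracket} and \ref{thm:derivation} the restriction of $D_\Lambda$ to $\mathbb{B}$ is the derivation determined by $\mathbf{h}_i \mapsto \mathbf{h}_{i-1}$, it is independent of $\Lambda$, so one may take $\Lambda = \Lambda_0$. Now let $\uu_x$ be any term of $\kschur_R$ with $x \notin W^0$, and write $x = wv$ with $w \in W^0$ and $v \in W_0$, $v \neq 1$. Theorem \ref{thm:scalars} gives $D_{\Lambda_0}(\uu_x) = D_{\Lambda_0}(\uu_w)\,\uu_v$, and every nonzero term of this product is some $\uu_{zv}$ with $\ell(zv) = \ell(z) + \ell(v)$; the parabolic factorization of such an element has $W_0$-part of length at least $\ell(v) \geq 1$, so $zv \notin W^0$. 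Hence, term by term and with no cancellation, the non-Grassmannian terms of $\kschur_R$ contribute nothing to the Grassmannian part of $D_{\Lambda_0}(\kschur_R)$. Since the only Grassmannian term of $\kschur_R$ is $\uu_{w_R}$ -- immediate from (\ref{eqn:coeff1})--(\ref{eqn:coeff0}), no expansion formula needed -- the Grassmannian part of $D_{\Lambda_0}(\kschur_R)$ is that of $D_{\Lambda_0}(\uu_{w_R})$, which is what your $\lambda = \emptyset$ analysis computes, now legitimately supported by Theorem \ref{thm:D on grassmannians} because the weight is $\Lambda_0$: the Grassmannian covers of $w_R$ correspond to the $(k+1)$-cores of size $|R|-1$ contained in $R$, the only such core is $S$, and the coefficient is the number of addable cells of $S$ whose residue equals that of the removed corner of $R$, namely one. (Note that this last point, not the uniqueness of the cell removal, is what makes the edge multiplicity equal to $1$.)
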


\begin{proof}
By Theorem \ref{thm:derivation} and Equations (\ref{eqn:coeff1}), (\ref{eqn:coeff0}), it is enough to count the  terms appearing in $D_\Lambda(\kschur_R)$ which come from $W^0$. Since $D_\Lambda$ is independent of choice of $\Lambda$ on $\mathbb{B}$, we may choose $\Lambda = \Lambda_0$ for our calculations. The only term from $W^0$ which appears in the expansion of $\kschur_R$ is the element $w_R \in W^0$ corresponding to $R$. By Theorem \ref{thm:D on grassmannians} and Theorem \ref{thm:scalars}, the only terms from $W^0$ which may appear in $D_{\Lambda_0}(\kschur_R)$ are those which are strongly covered by $w_R$. However, $v$ being strongly covered by $w_R$ is equivalent to $v \emptyset \subset R$ (see for instance \cite{Lascoux, MM}). There is only one partition contained in $R$ which has size $|R|-1$; this partition is $S$.
\end{proof}

For $\lambda \subset R$ and a cell $x \in \lambda$, we let $\READ{R,\lambda,x}$
denote the word corresponding to the diagram $(R \cup \lambda_x )/ \lambda$,
where $\lambda_x$ denotes the diagram with the cell $x$ removed.

\begin{eg} Let $k = 4$, let $R = (3,3)$, $\lambda = (2,1) \subset R$ and $x = (1,2) \in \lambda$. 
Then $\READ{R,\lambda,x} = s_2 s_3 s_1 s_0 s_2$. 
\[ \tikztableausmall{{X,X,2},{X,0,1},{\boldentry 3,\boldentry X},{\boldentry 2},}\]

\end{eg}

\begin{thm}\label{thm:mainthm}
\[\kschur_{(c^{r-1}, c-1)} = \sum_{\lambda \subset R} \sum_{x \in \lambda} \uu_{\READ{R,\lambda,x}}.\]
\end{thm}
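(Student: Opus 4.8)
The plan is to compute $\kschur_{(c^{r-1},c-1)}$ in two different ways using Proposition \ref{prop:rectangle} and then match terms. On one hand, Proposition \ref{prop:rectangle} tells us that $\kschur_{(c^{r-1},c-1)} = \kschur_S = D_{\Lambda_0}(\kschur_R)$, where I am free to choose $\Lambda = \Lambda_0$ since $D_\Lambda$ is independent of the choice of fundamental weight on $\mathbb{B}$. On the other hand, the BBTZ theorem gives the explicit expansion $\kschur_R = \sum_{\lambda \subset R} \uu_{\READ{(R\cup\lambda)/\lambda}}$. So the strategy is to apply $D_{\Lambda_0}$ term-by-term to this sum and show that the result is exactly $\sum_{\lambda \subset R}\sum_{x \in \lambda} \uu_{\READ{R,\lambda,x}}$.

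\medskip
\noindent
First I would fix a $k$-bounded partition $\lambda \subset R$ and analyze $D_{\Lambda_0}(\uu_{\READ{(R\cup\lambda)/\lambda}})$. The reading word $\READ{(R\cup\lambda)/\lambda}$ corresponds to the affine Grassmannian element $w$ whose action on the empty core builds up the core of $R\cup\lambda$ from the core of $\lambda$; more precisely the word factors as a reduced expression recording the cells of the skew shape $(R\cup\lambda)/\lambda$. I would apply Theorem \ref{thm:D on grassmannians} (combined with Theorem \ref{thm:scalars} to handle the non-Grassmannian part), which expresses $D_{\Lambda_0}(\uu_w)$ as a sum over the positions $\ell$ in a reduced word, with the coefficient $c^{w,0}_z$ counting addable $0$-cells of an intermediate core. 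The key combinatorial claim is that deleting the letter in position $\ell$ corresponds precisely to removing a single cell $x$ from $\lambda$ (equivalently, building the skew shape $(R\cup\lambda_x)/\lambda$), so that the surviving terms with nonzero coefficient are indexed exactly by cells $x \in \lambda$ and each yields $\uu_{\READ{R,\lambda,x}}$ with coefficient $1$.

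\medskip
\noindent
The main obstacle is controlling the multiplicities $c^{w,0}_z$: a priori Theorem \ref{thm:D on grassmannians} produces coefficients that count addable cells, which can exceed $1$ and can also be attached to letters that do not correspond to removing a cell of $\lambda$. I would need to argue that for the specific reduced word coming from the reading word of a $k$-rectangle skew shape, the terms that produce Grassmannian or otherwise ``extra'' contributions either cancel or collapse; the cleanest route is likely to track which deleted letters leave the word reduced and matching a $\READ{R,\lambda,x}$, and to show the addable-cell count at each relevant position is exactly $1$. This is where the hypothesis $c + r = k+1$ (the rectangle condition) is essential, since it forces the residues along the skew shape to be arranged so that each removable cell of $\lambda$ contributes a single simple multiplicity.

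\medskip
\noindent
Finally, I would verify that distinct pairs $(\lambda, x)$ give distinct words $\READ{R,\lambda,x}$ (or, if collisions occur, that the counting still matches), so that summing the term-by-term computation over all $\lambda \subset R$ reproduces the double sum $\sum_{\lambda \subset R}\sum_{x \in \lambda} \uu_{\READ{R,\lambda,x}}$ exactly. Combining this with $\kschur_{(c^{r-1},c-1)} = D_{\Lambda_0}(\kschur_R)$ from Proposition \ref{prop:rectangle} completes the proof. The bookkeeping in the final tallying step is routine once the coefficient-$1$ claim in the previous paragraph is established, so I expect the genuine difficulty to be concentrated in the combinatorial analysis of the addable-cell counts.
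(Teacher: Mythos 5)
Your outline follows the same skeleton as the paper's proof---Proposition \ref{prop:rectangle} applied to the expansion of $\kschur_R$ from \cite{BBTZ}, then a term-by-term evaluation of the down operator via Theorems \ref{thm:scalars} and \ref{thm:D on grassmannians}---but your choice of fundamental weight $\Lambda_0$ is not a harmless normalization: it is exactly where the proof lives or dies, and with $\Lambda_0$ it dies. The paper invokes Proposition \ref{prop:rectangle} with $\Lambda=\Lambda_c$, and this choice is the one idea your proposal is missing. Its point is that $c$ is the unique residue \emph{not} occurring among the cells of $R=(c^r)$ when $c+r=k+1$. Consequently each word $\READ{(R\cup\lambda)/\lambda}$ factors as $\uu_u\uu_v$, where $v\in W_c$ is the part coming from $R/\lambda$ (all its letters avoid $c$) and $u$, the part coming from the lowered copy of $\lambda$, lies in $W^c$ (it is the reading word of the core $\lambda$ with all residues shifted by $c$; note that $\lambda\subseteq R$ forces $\lambda$ to be a $(k+1)$-core). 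Theorem \ref{thm:scalars} then says $D_{\Lambda_c}$ never touches the letters of $R/\lambda$, and Theorem \ref{thm:D on grassmannians} says that deleting the letter of a cell $x\in\lambda$ contributes with coefficient equal to the number of addable cells, of the same residue as $x$, in the partial shape of $\lambda$ built before $x$. That number is exactly $1$: for any shape $\mu\subseteq R$ the contents of addable cells lie in the interval $[-r,c]$ of length $k+1$, so two addable cells can share a residue only if their contents are $c$ and $-r$, i.e.\ only in residue $c$, while the residue of $x$ is never $c$. This is what makes $D_{\Lambda_c}$ act on each picture by deleting one bold letter with coefficient $1$, which is literally the right-hand side of Theorem \ref{thm:mainthm}.

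With $\Lambda_0$ the key claim at the end of your second paragraph is false, and no amount of bookkeeping repairs it term by term. Take $\lambda=\emptyset$: its contribution to the right-hand side of Theorem \ref{thm:mainthm} is zero, the inner sum being empty. But $w_R\in W^0$, so Theorem \ref{thm:D on grassmannians} applies directly with $j=0$ and shows that $D_{\Lambda_0}(\uu_{w_R})\neq 0$; for instance, deleting the letter of the unique removable corner of $R$ leaves the reading word of $S=(c^{r-1},c-1)$, and the corresponding coefficient is the number of addable cells of residue $2c$ in the core $S$, which is $1$, so $\uu_{w_S}$ itself appears (Example \ref{eg:Dop} shows how large such expansions are in general: six terms, with coefficients up to $3$). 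Since every coefficient produced by a down operator is a nonnegative pairing $\langle\alpha_{v,w}^\vee,\Lambda\rangle$, these extra terms cannot cancel against anything; they are instead redistributed across other shapes (under $\Lambda_c$, the term $\uu_{w_S}$ arises from a nonempty $\lambda$, as one sees in Example \ref{eg:32}), so proving the theorem with $\Lambda_0$ would require a global bijection between scrambled sets of terms that you have not supplied. Note also that with $j=0$ your appeal to Theorem \ref{thm:scalars} has no footing: the part of the word coming from $R/\lambda$ contains residue-$0$ letters (the cell $(1,1)$, whenever it is not in $\lambda$), hence is not in $W_0$, and the bold part is generally not in $W^0$. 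Replacing $\Lambda_0$ by $\Lambda_c$ throughout, and inserting the addable-cell uniqueness observation above, turns your outline into precisely the paper's proof.
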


\begin{proof}
    This follows from Theorem \ref{thm:D on grassmannians}
    and the application of Proposition \ref{prop:rectangle} with the
    fundamental weight $\Lambda_c$.
\end{proof}

\begin{eg}\label{eg:32}
Let $k = 4$ and $\lambda = (3,2)$. Using Example \ref{eg:33}, we can realize $\mathfrak{s}_{3,2}^{(4)}$ as $D_{\Lambda_3}(\mathfrak{s}_{3,3}^{(4)})$. $D_{\Lambda_3}$ acts on the pictures by deleting a bold letter from a term in the expansion of $\mathfrak{s}_{3,3}^{(4)}$. In particular, the first diagram of $\mathfrak{s}_{3,3}^{(4)}$ has no bold letters, so it does not contribute any terms to $\mathfrak{s}_{3,2}^{(4)}$.

The second diagram gives a term:
$$
\tikztableausmall{{X,1,2},{4,0,1},{\boldentry 3},}
\longmapsto
\begin{array}{c}\tikztableausmall{{X,1,2},{4,0,1},{\boldentry X},} \\ \uu_1\uu_0\uu_4\uu_2\uu_1 \end{array}
$$
The third and fourth diagrams each give two terms:
$$
  \tikztableausmall{{X,X,2},{4,0,1},{\boldentry 3,\boldentry 4},}
  \longmapsto
\begin{array}{cc}
  \tikztableausmall{{X,X,2},{4,0,1},{\boldentry X,\boldentry 4},}
& \tikztableausmall{{X,X,2},{4,0,1},{\boldentry 3,\boldentry X},}
\\
  \uu_4\uu_1\uu_0\uu_4\uu_2
& \uu_3\uu_1\uu_0\uu_4\uu_2
\end{array}
$$
$$
  \tikztableausmall{{X,1,2},{X,0,1},{\boldentry 3},{\boldentry 2},}
  \longmapsto
\begin{array}{cc}
  \tikztableausmall{{X,1,2},{X,0,1},{\boldentry X},{\boldentry 2},}
& \tikztableausmall{{X,1,2},{X,0,1},{\boldentry 3},{\boldentry X},}
\\
  \uu_2\uu_1\uu_0\uu_2\uu_1
& \uu_3\uu_1\uu_0\uu_2\uu_1
\end{array}
$$
The fifth and sixth diagrams gives 3 terms each:
$$
  \tikztableausmall{{X,X,2},{X,0,1},{\boldentry 3,\boldentry 4},{\boldentry 2},}
  \longmapsto
\begin{array}{ccc}
  \tikztableausmall{{X,X,2},{X,0,1},{\boldentry X,\boldentry 4},{\boldentry 2},}
& \tikztableausmall{{X,X,2},{X,0,1},{\boldentry 3,\boldentry X},{\boldentry 2},}
& \tikztableausmall{{X,X,2},{X,0,1},{\boldentry 3,\boldentry 4},{\boldentry X},}
\\
  \uu_2\uu_4\uu_1\uu_0\uu_2
& \uu_2\uu_3\uu_1\uu_0\uu_2
& \uu_4\uu_3\uu_1\uu_0\uu_2
\end{array}
$$
$$
  \tikztableausmall{{X,X,X},{4,0,1},{\boldentry 3,\boldentry 4,\boldentry 0},}
  \longmapsto
\begin{array}{ccc}
  \tikztableausmall{{X,X,X},{4,0,1},{\boldentry 3,\boldentry 4,\boldentry X},}
& \tikztableausmall{{X,X,X},{4,0,1},{\boldentry 3,\boldentry X,\boldentry 0},}
& \tikztableausmall{{X,X,X},{4,0,1},{\boldentry X,\boldentry 4,\boldentry 0},}
\\
  \uu_4\uu_3\uu_1\uu_0\uu_4
& \uu_0\uu_3\uu_1\uu_0\uu_4
& \uu_0\uu_4\uu_1\uu_0\uu_4
\end{array}
$$
The seventh and eigth diagrams give 4 terms each:
$$
  \tikztableausmall{{X,X,2},{X,X,1},{\boldentry 3,\boldentry 4},{\boldentry 2,\boldentry 3},}
  \longmapsto
\begin{array}{cccc}
  \tikztableausmall{{X,X,2},{X,X,1},{\boldentry 3,\boldentry 4},{\boldentry 2,\boldentry X},}
& \tikztableausmall{{X,X,2},{X,X,1},{\boldentry 3,\boldentry 4},{\boldentry X,\boldentry 3},}
& \tikztableausmall{{X,X,2},{X,X,1},{\boldentry 3,\boldentry X},{\boldentry 2,\boldentry 3},}
& \tikztableausmall{{X,X,2},{X,X,1},{\boldentry X,\boldentry 4},{\boldentry 2,\boldentry 3},}
\\
  \uu_2\uu_4\uu_3\uu_1\uu_2 
& \uu_3\uu_4\uu_3\uu_1\uu_2 
& \uu_3\uu_2\uu_3\uu_1\uu_2 
& \uu_3\uu_2\uu_4\uu_1\uu_2 
\end{array}
$$
$$
  \tikztableausmall{{X,X,X},{X,0,1},{\boldentry 3,\boldentry 4,\boldentry 0},{\boldentry 2},}
  \longmapsto
\begin{array}{cccc}
  \tikztableausmall{{X,X,X},{X,0,1},{\boldentry 3,\boldentry 4,\boldentry 0},{\boldentry X},}
& \tikztableausmall{{X,X,X},{X,0,1},{\boldentry 3,\boldentry 4,\boldentry X},{\boldentry 2},}
& \tikztableausmall{{X,X,X},{X,0,1},{\boldentry 3,\boldentry X,\boldentry 0},{\boldentry 2},}
& \tikztableausmall{{X,X,X},{X,0,1},{\boldentry X,\boldentry 4,\boldentry 0},{\boldentry 2},}
\\
  \uu_0\uu_4\uu_3\uu_1\uu_0
& \uu_2\uu_4\uu_3\uu_1\uu_0
& \uu_2\uu_0\uu_3\uu_1\uu_0
& \uu_2\uu_0\uu_4\uu_1\uu_0
\end{array}
$$
The ninth diagram gives 5 terms:
$$
  \tikztableausmall{{X,X,X},{X,X,1},{\boldentry 3,\boldentry 4,\boldentry 0},{\boldentry 2,\boldentry 3},}
  \longmapsto
\begin{array}{ccccc}
  \tikztableausmall{{X,X,X},{X,X,1},{\boldentry 3,\boldentry 4,\boldentry 0},{\boldentry 2,\boldentry X},}
& \tikztableausmall{{X,X,X},{X,X,1},{\boldentry 3,\boldentry 4,\boldentry 0},{\boldentry X,\boldentry 3},}
& \tikztableausmall{{X,X,X},{X,X,1},{\boldentry 3,\boldentry 4,\boldentry X},{\boldentry 2,\boldentry 3},}
& \tikztableausmall{{X,X,X},{X,X,1},{\boldentry 3,\boldentry X,\boldentry 0},{\boldentry 2,\boldentry 3},}
& \tikztableausmall{{X,X,X},{X,X,1},{\boldentry X,\boldentry 4,\boldentry 0},{\boldentry 2,\boldentry 3},}
\\
  \uu_2\uu_0\uu_4\uu_3\uu_1
& \uu_3\uu_0\uu_4\uu_3\uu_1
& \uu_3\uu_2\uu_4\uu_3\uu_1
& \uu_3\uu_2\uu_0\uu_3\uu_1
& \uu_3\uu_2\uu_0\uu_4\uu_1
\end{array}
$$
The tenth and final diagram gives six terms:
$$
  \tikztableausmall{{X,X,X},{X,X,X},{\boldentry 3,\boldentry 4,\boldentry 0},{\boldentry 2,\boldentry 3,\boldentry 4},}
  \longmapsto
\begin{array}{ccc}
  \tikztableausmall{{X,X,X},{X,X,X},{\boldentry 3,\boldentry 4,\boldentry 0},{\boldentry 2,\boldentry 3,\boldentry X},}
& \tikztableausmall{{X,X,X},{X,X,X},{\boldentry 3,\boldentry 4,\boldentry 0},{\boldentry 2,\boldentry X,\boldentry 4},}
& \tikztableausmall{{X,X,X},{X,X,X},{\boldentry 3,\boldentry 4,\boldentry 0},{\boldentry X,\boldentry 3,\boldentry 4},}
\\
  \uu_3\uu_2\uu_0\uu_4\uu_3
& \uu_4\uu_2\uu_0\uu_4\uu_3
& \uu_4\uu_3\uu_0\uu_4\uu_3
\\[1em]
  \tikztableausmall{{X,X,X},{X,X,X},{\boldentry 3,\boldentry 4,\boldentry X},{\boldentry 2,\boldentry 3,\boldentry 4},}
& \tikztableausmall{{X,X,X},{X,X,X},{\boldentry 3,\boldentry X,\boldentry 0},{\boldentry 2,\boldentry 3,\boldentry 4},}
& \tikztableausmall{{X,X,X},{X,X,X},{\boldentry X,\boldentry 4,\boldentry 0},{\boldentry 2,\boldentry 3,\boldentry 4},}
\\
  \uu_4\uu_3\uu_2\uu_4\uu_3
& \uu_4\uu_3\uu_2\uu_0\uu_3
& \uu_4\uu_3\uu_2\uu_0\uu_4
\end{array}
$$
%
Then $\mathfrak{s}_{3,2}^{(4)}$ is a sum of the 30 words above.
\end{eg}

The following lemma is a standard result in the theory of Bruhat order on Coxeter groups.

\begin{lemma}\label{lemma:delete_letter}
If $w = s_{i_1}\cdots s_{i_m}$ is a reduced expression for $w$, $ u = s_{i_1}\cdots \widehat{s_{i_j}}\cdots s_{i_m}$ and $ v = s_{i_1}\cdots \widehat{ s_{i_\ell}}\cdots s_{i_m}$, then $u = v$ if and only if $j = \ell$.
\end{lemma}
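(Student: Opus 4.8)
The plan is to reduce the statement to the standard fact that the reflections attached to the letters of a reduced word are pairwise distinct. The implication $j = \ell \Rightarrow u = v$ is trivial, since then $u$ and $v$ are literally the same word, so all of the work lies in the converse.

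For each $1 \le p \le m$, set $t_p = (s_{i_1}\cdots s_{i_{p-1}})\, s_{i_p}\, (s_{i_{p-1}}\cdots s_{i_1})$; this is a reflection, being a conjugate of the simple reflection $s_{i_p}$. First I would record the identity $w\, u^{-1} = t_j$: writing $u^{-1} = s_{i_m}\cdots s_{i_{j+1}}\, s_{i_{j-1}}\cdots s_{i_1}$ and placing $w = s_{i_1}\cdots s_{i_m}$ to its left, the interior block $s_{i_{j+1}}\cdots s_{i_m}\, s_{i_m}\cdots s_{i_{j+1}}$ telescopes to the identity and exactly $t_j$ survives. Hence $u = t_j\, w$, and by the same computation $v = t_\ell\, w$. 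Cancelling $w$ on the right shows that $u = v$ if and only if $t_j = t_\ell$.

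It remains to see that $t_1, \dots, t_m$ are pairwise distinct, which is the one place where genuine content enters. I would argue this at the level of roots: with $\beta_p = s_{i_1}\cdots s_{i_{p-1}}(\alpha_{i_p})$ one has $t_p = s_{\beta_p}$, and the standard theory of reduced words identifies the list $\beta_1, \dots, \beta_m$ with the inversion set $\{\beta \in \realrootsaf : \beta > 0,\ w^{-1}\beta < 0\}$, a set of exactly $\ell(w) = m$ positive roots. Since the correspondence $p \mapsto \beta_p$ is a bijection onto this set, the $\beta_p$ are distinct, so $t_p = s_{\beta_p}$ determines $p$; thus $t_j = t_\ell$ forces $j = \ell$.

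The main (indeed only) obstacle is the distinctness of the inversion roots $\beta_p$; everything else is bookkeeping with involutions. As this distinctness is exactly the classical ``a reduced word for $w$ has $\ell(w)$ distinct inversions'' statement, in the present Coxeter-group setting I would cite it rather than reprove it, which is consistent with the lemma being labelled standard.
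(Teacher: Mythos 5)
Your proof is correct. There is, however, nothing in the paper to compare it against line by line: the authors do not prove this lemma at all, they simply introduce it as ``a standard result in the theory of Bruhat order on Coxeter groups'' and use it. Your argument supplies the standard proof. The telescoping computation is right: $wu^{-1}$ collapses to $t_j$ because the suffix $s_{i_{j+1}}\cdots s_{i_m}$ cancels against its reversal, giving $u = t_jw$ and $v = t_\ell w$, so the lemma reduces to the pairwise distinctness of the reflections $t_1,\dots,t_m$ attached to a reduced word. That distinctness is exactly the classical characterization of reduced words (it appears in Bj\"orner--Brenti, the paper's reference [BB]), equivalently the statement you invoke that the inversion roots $\beta_p = s_{i_1}\cdots s_{i_{p-1}}(\alpha_{i_p})$ are distinct positive roots forming $\{\beta > 0 : w^{-1}\beta < 0\}$; your conventions here check out. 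One step worth making explicit is the passage from $t_j = t_\ell$ to $\beta_j = \beta_\ell$: a reflection determines its root only up to sign, so you need that all the $\beta_p$ are \emph{positive} (which they are) for distinct roots to yield distinct reflections --- you do say this, and citing the inversion-set fact rather than reproving it is entirely appropriate for a lemma the paper itself declares standard.
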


\begin{cor}
Let $S = (c^{r-1}, c-1)$ with $c+r = k+1$. Then the coefficient $c_{\lambda, S}^{\nu, (k)}$ is either $0$ or $1$.
\end{cor}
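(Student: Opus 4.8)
The plan is to convert the statement into the multiplicity-freeness of the expansion in Theorem~\ref{thm:mainthm}, and then to prove the latter by showing that a certain reading-word map is injective. First I would use commutativity: since $\mathbb{B}\cong\Lambda_{(k)}=\mathbb{Q}[h_1,\dots,h_k]$ is a commutative ring, $s^{(k)}_\lambda s^{(k)}_S=s^{(k)}_S s^{(k)}_\lambda$, so $c^{\nu,(k)}_{\lambda,S}=c^{\nu,(k)}_{S,\lambda}$. Theorem~\ref{obs:equiv} then identifies this with a single coefficient of $\kschur_S$, namely $c^{\nu,(k)}_{S,\lambda}=d_S^{\,w_\nu w_\lambda^{-1}}$. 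Consequently every $k$-Littlewood--Richardson coefficient $c^{\nu,(k)}_{\lambda,S}$ is one of the coefficients $d_S^{\,w}$ appearing in the expansion $\kschur_S=\sum_{\lambda\subseteq R}\sum_{x\in\lambda}\uu_{\READ{R,\lambda,x}}$ of Theorem~\ref{thm:mainthm}, so it suffices to prove that this expansion is multiplicity-free; equivalently, that the map $(\lambda,x)\mapsto \READ{R,\lambda,x}\in W$ is injective on pairs with $\lambda\subseteq R$ and $x\in\lambda$.

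Next I would dispose of the diagonal case $\lambda=\lambda'$. By \cite{BBTZ} the word $\READ{(R\cup\lambda)/\lambda}$ is reduced, and by construction $\READ{R,\lambda,x}$ is obtained from it by deleting the single letter indexed by the cell $x$ (the bold cells of the diagram); since each resulting word has length $|R|-1$, it is again reduced. As distinct cells of $\lambda$ occupy distinct positions of this one fixed reduced word, Lemma~\ref{lemma:delete_letter} gives $\READ{R,\lambda,x}\ne\READ{R,\lambda,x'}$ for $x\ne x'$ at once.

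The main obstacle is the off-diagonal case, where I must exclude $\READ{R,\lambda,x}=\READ{R,\lambda',x'}$ with $\lambda\ne\lambda'$. Here Lemma~\ref{lemma:delete_letter} does not apply, because the two words are single-letter deletions from two \emph{different} reduced words---those of the distinct elements occurring in the multiplicity-free expansion of $\kschur_R$. My plan is to recover the pair $(\lambda,x)$ directly from the affine permutation $w=\READ{R,\lambda,x}$, by reconstructing the marked skew diagram $(R\cup\lambda_x)/\lambda$ of which $w$ is the reading word. The first step is to factor a reduced word for $w$ into maximal runs of residues that decrease by $1$ modulo $k+1$; each row of a skew shape contributes exactly such a run (reading a row right-to-left lowers the residue by $1$ at each step), so these runs are the natural candidates for the rows of the diagram.

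The delicate point---and the reason this step is genuinely the hard part---is that a residue determines a column only modulo $k+1$, and consecutive runs can abut in a way that blurs the boundary between two rows; so the run decomposition by itself does not pin down the absolute positions of the cells. I expect to resolve this using that we work inside the \emph{fixed} rectangle $R$: the containment $\lambda\subseteq R$ and the outer shape $R\cup\lambda_x$ bound the available rows and columns, and, combined with the action of $W$ on $(k+1)$-cores, this should fix the absolute placement of every row and hence recover the inner shape $\lambda$, the outer shape $R\cup\lambda_x$, and therefore the deleted cell $x$. Once this reconstruction is in place the map is injective, every $d_S^{\,w}$ lies in $\{0,1\}$, and the Corollary follows.
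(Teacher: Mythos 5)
Your reduction of the corollary to the multiplicity-freeness of the expansion in Theorem~\ref{thm:mainthm} (via commutativity of $\mathbb{B}$ and Theorem~\ref{obs:equiv}), and your handling of the diagonal case $\lambda=\mu$ by Lemma~\ref{lemma:delete_letter}, coincide with what the paper does. The genuine gap is the off-diagonal case $\lambda\neq\mu$, which you correctly isolate as the hard part but do not prove: the passage beginning ``I expect to resolve this'' is a plan, not an argument. Worse, the reconstruction strategy as sketched has a structural defect: you propose to decompose ``a reduced word for $w$'' into maximal runs of residues decreasing by one, but injectivity must be established at the level of group elements, and the run decomposition is not invariant under braid and commutation moves; the entire content of the statement is that reading words of two \emph{different} marked diagrams cannot be equal \emph{in $W$}, so a procedure applied to one chosen reduced word proves nothing unless you also show it is independent of that choice, and no such argument is given. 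A smaller slip: ``since each resulting word has length $|R|-1$, it is again reduced'' is a non sequitur --- deleting one letter from a reduced word can drop the length by more than one (delete the middle letter of $s_1s_2s_1$). This does not damage your diagonal case, since Lemma~\ref{lemma:delete_letter} compares group elements rather than words, but the justification is wrong as stated.

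For comparison, the paper settles the case $\lambda\neq\mu$ not by reconstructing the diagram but by a short parabolic-factorization argument, which is the idea your sketch is missing. Write $\READ{R,\lambda,x}=u_x\,w_{R/\lambda}$, where $w_{R/\lambda}$ is the reading word of $R/\lambda$ and $u_x$ is the reading word of the shifted (bold) copy of $\lambda$ with the letter of $x$ deleted. Cells of $R$ never have residue $c$, so the cells of the shifted copy of $\lambda$ never have residue $2c$; hence $u_x^{-1}\in W_{2c}$. On the other hand, $w_{R/\lambda}^{-1}\in W^{2c}$, because the unique removable corner of $R$ has residue $2c$. Now if $\READ{R,\lambda,x}=\READ{R,\mu,y}$ as elements of $W$, inverting gives $w_{R/\lambda}^{-1}u_x^{-1}=w_{R/\mu}^{-1}v_y^{-1}$: two factorizations of one element as a minimal-length coset representative in $W^{2c}$ times an element of $W_{2c}$. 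Uniqueness of such factorizations forces $w_{R/\lambda}=w_{R/\mu}$, hence $\lambda=\mu$, a contradiction. So instead of trying to recover the absolute geometry of the diagram from a word --- the step you could not pin down --- one separates the group element into its $W_{2c}$-parabolic part and its coset representative and lets uniqueness of that decomposition do the work.
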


\begin{proof}

Suppose a coefficient $c_{\lambda, S}^{\nu, (k)} > 1$. By Observation \ref{obs:equiv}, there exists two terms in the expansion of Theorem \ref{thm:mainthm} which give equivalent reduced words. Therefore, there exists distinct $x \in \lambda \subset R$ and $y \in \mu \subset R$ such that 
$\READ{R, \lambda, x} = \READ{R, \mu, y}$.

If $\lambda = \mu$ then $\READ{R, \lambda,x} = \READ{R, \lambda, y}$ implies $x = y$ by Lemma \ref{lemma:delete_letter}.

Otherwise we assume $\lambda \neq \mu$. Let $u \in W^c$ and $w_{R/\lambda} \in W_c$ be such that $uw_{R/\lambda} = \READ{(R\cup\lambda)/\lambda}$. Then  $\READ{R, \lambda,x} = u_x w_{R/\lambda}$, where $u_x$ denotes the resulting word from deleting the letter corresponding to  $x \in \lambda$ from $u$.  Similarly, let $v \in W^c$ and $w_{R/\mu} \in W_c$ be such that $v w_{R/\mu} = \READ{(R\cup\mu)/\mu}$, and $\READ{R, \mu,x} = v_y w_{R/\mu}$.

It is easy to see that $u^{-1} \in W_{2c}$. Also $w_{R/\lambda}^{-1} \in W^{2c}$ since the unique removable cell of the rectangle $R$ has residue $2c$. Similarly $v^{-1} \in W_{2c}$ and $w_{R/\mu}^{-1} \in W^{2c}$. Also $u_x^{-1}, v_y^{-1} \in W_{2c}$ so $w_{R/\lambda}^{-1} u_x^{-1}= w_{R/\mu}^{-1} v_y^{-1}$, which implies $w_{R/\lambda} = w_{R/\mu}$. Therefore $\lambda = \mu$, a contradiction.
\end{proof}

\begin{eg}

Using Example \ref{eg:32} and Theorem \ref{obs:equiv}, we compute $c_{(2,1), (3,2)}^{(3,3,1,1), (4)}$. The action of the element $\uu_2 \uu_3 \uu_1 \uu_0 \uu_2$ on the $5$-core $(2,1)$ gives the $5$-core $(4,4,1,1)$, which corresponds to the $4$-bounded partition $(3,3,1,1)$. Therefore, the coefficient $c_{(2,1), (3,2)}^{(3,3,1,1), (4)}$ is the coefficient of $\uu_2 \uu_3 \uu_1 \uu_0 \uu_2$ in the expansion of $\mathfrak{s}_{3,2}^{(4)}$, which is $1$.
\end{eg}

\section{Further directions}

As mentioned in the introduction, this paper is the introduction to a more
general family of operators $\{D_J\}$, indexed over all compositions $J$. The
operator $D_{\Lambda_0}$ studied here is one instance of these operators; it is
$D_J$ for the composition $J = [1]$. These operators are defined on the affine
nilCoxeter algebra $\mathbb{A}$ and seem to restrict nicely to the affine
Fomin-Stanley subalgebra. We will develop properties and applications of these
operators in a future article.

\section*{Acknowledgements}
We have benefited from many conversations with Nantel Bergeron, Steven Pon and Mike Zabrocki, as well as email correspondence with Thomas Lam, Jennifer Morse and Mark Shimozono. We also thank Anne Schilling for valuable comments and corrections.

This research was facilitated by computer exploration using the open-source
mathematical software \texttt{Sage}~\cite{sage} and its algebraic
combinatorics features developed by the \texttt{Sage-Combinat}
community \cite{sage-combinat}.

\bibliographystyle{abbrvnat}

\begin{thebibliography}{A}

\bibitem{BSZ} J.~Bandlow, A.~Schilling, M.~Zabrocki, The Murnaghan-Nakayama rule for k-Schur functions, \emph{J. Combin. Theory Ser. A}, \textbf{118} (2011), no. 5, 1588--1607.

\bibitem{BBPZ} C.~Berg, N.~Bergeron, S.~Pon, M.~Zabrocki, Expansions of $k$-Schur functions in the affine nilCoxeter algebra, {\tt arXiv:1111.3588.}

\bibitem{BBTZ} C.~Berg, N.~Bergeron, H.~Thomas, M.~Zabrocki, Expansion of $k$-Schur functions for maximal $k$-rectangles within the affine nilCoxeter algebra, {\tt arXiv:1107.3610.}

\bibitem{BB} A.~Bj\"orner and F.~Brenti,
 \newblock {\em Combinatorics of Coxeter groups}, volume 231 of {\em Graduate
  Texts in Mathematics}.
\newblock Springer, New York, 2005.

\bibitem{Fo1} S.~Fomin, Duality of graded graphs, \emph{J. Algebraic. Combin.} \textbf{3} (1994), 357--404.

\bibitem{Fo2} S.~Fomin, Schensted algorithms for dual graded graphs, \emph{J. Algebraic. Combin.} \textbf{4} (1995), 5--45.

\bibitem{Lam0} T.~Lam, Affine Stanley Symmetric Functions. \emph{Amer. J. Math.}, \textbf{128} (2006), 1553--1586.

\bibitem{Lam1} T.~Lam, Stanley symmetric functions and Peterson algebras, {\tt arXiv:1007.2871}.

\bibitem{Lam2} T.~Lam, Schubert polynomials for the affine Grassmannian, \emph{J. Amer. Math. Soc.}, \textbf{21} (2008), 259--281.

\bibitem{LLMS} T.~Lam, L.~Lapointe, J.~Morse, and M.~Shimozono, Affine insertion and Pieri rules for the affine Grassmannian, \emph{Mem. Amer. Math. Soc.}, \textbf{208} (2010), no. 977.

\bibitem{LSS} T.~Lam, A. Schilling, M. Shimozono, Schubert Polynomials for the affine Grassmannian of the symplectic group, \emph{Math. Z.}, \textbf{264} (2010), no 4., 765--811.

\bibitem{LS}T.~Lam, M.~Shimozono, Dual graded graphs for Kac-Moody algebras, \emph{Algebra and Number Theory}, \textbf{1} (2007), 451--488.

\bibitem{LS2}T.~Lam, M.~Shimozono, Quantum cohomology of $G/P$ and homology of affine Grassmannian, \emph{Acta Math.}, \textbf{204} (2010), 49--90.

\bibitem{LLM} L.~Lapointe, A.~Lascoux, J.~Morse, Tableau atoms and a new Macdonald positivity conjecture, \emph{Duke Math. J.},  \textbf{116} (2003),  no. 1, 103--146.

\bibitem{LM0} L.~Lapointe and J.~Morse, Schur function analogs for a filtration of the symmetric function space, \emph{J. Combin. Theory Ser. A}, \textbf{101} (2003), no. 2, 191--224.

\bibitem{LM1} L.~Lapointe and J.~Morse, Tableaux on $k+1$-cores, reduced words for affine permutations, and $k$-Schur expansions, \emph{J. Combin. Theory Ser. A}, \textbf{112} (2005), no. 1, 44--81.

\bibitem{LM2} L.~Lapointe and J.~Morse, A $k$-tableau characterization of $k$-Schur functions, \emph{Adv. Math.}, \textbf{213} (2007),  no. 1, 183--204.

\bibitem{Lascoux} A.~Lascoux: Ordering the affine symmetric group, \emph{Algebraic combinatorics and applications (G\"o\ss weinstein, 1999)}, 219-231, Springer, Berlin, 2001.

\bibitem{MM} K.C.~Misra and T.~Miwa: Crystal base for the basic representation of $U_q(\mathfrak{sl}(n))$, \emph{Comm. Math. Phys.} \textbf{134} (1990), no. 1, 79--88.

\bibitem{mit88} S.~Mitchell, Quillen's theorem on buildings and the loops on a symmetric space, \emph{L'Enseignement Mathematique}, 34 (1988), 123-166.

\bibitem{Pet} D.~Peterson, Quantum cohomology of $G/P$, Lecture notes, M.I.T., 1997.

\bibitem{P} S.~Pon, Affine Stanley symmetric functions for classical types, {\tt arXiv:1111.3312}.

\bibitem{ps86} A.~Pressley, G.~Segal, \textit{Loop Groups}, \emph{Oxford Science Publications}, 1986.

\bibitem{sage}
W.\thinspace{}A.~Stein et~al.,
\textit{{S}age {M}athematics {S}oftware ({V}ersion 4.3.3)},
The Sage Development Team, 2010, {\tt http://www.sagemath.org}.

\bibitem{sage-combinat}
The {S}age-{C}ombinat community,
\textit{{{S}age-{C}ombinat}: enhancing Sage as a toolbox for computer exploration in algebraic combinatorics}, {{\tt http://combinat.sagemath.org}}, 2008.

\bibitem{Sta} R.~Stanley, Differential posets, \emph{J. Amer. Math Soc.} \textbf{1} (1988), 919--961.

\bibitem{Shim} M.~Shimozono, Schubert calculus of the affine Grassmannian, Notes, 2010.

\end{thebibliography}

\end{document}